\newtheorem{theorem}{Theorem}[section]
\newtheorem{lemma}[theorem]{Lemma}
\newtheorem{corollary}[theorem]{Corollary}
\theoremstyle{definition}
\newtheorem{definition}[theorem]{Definition}
\theoremstyle{remark}
\newtheorem{remark}[theorem]{Remark}
\numberwithin{equation}{section}
\begin{document}

\title{Sparse Random Tensors: Concentration, Regularization and Applications}
\author{Zhixin Zhou}
\address{ Department of Management Sciences, City University of Hong Kong }
\email{zhixzhou@cityu.edu.hk  }

\author{Yizhe Zhu}
\address{Department of Mathematics, University of California, San Diego, La Jolla, CA 92093}
\email{yiz084@ucsd.edu}

%
\date{\today}
\keywords{sparse random tensor, spectral norm, hypergraph expander, tensor sparsification}

\begin{abstract}  
We prove a non-asymptotic concentration inequality for the spectral norm of sparse inhomogeneous random tensors with Bernoulli entries. For an order-$k$ inhomogeneous random tensor $T$ with  sparsity $p_{\max}\geq \frac{c\log n}{n }$, we show that $\|T-\mathbb E T\|=O(\sqrt{n p_{\max}}\log^{k-2}(n))$ with high probability. The optimality of this bound up to polylog  factors is provided by an information theoretic lower bound.
By tensor unfolding, we extend the range of sparsity to 
$p_{\max}\geq  \frac{c\log n}{n^{m}}$ with $1\leq m\leq k-1$ and obtain concentration inequalities for different sparsity regimes.
We also provide a simple way to regularize  $T$ such that $O(\sqrt{n^{m}p_{\max}})$ concentration still holds  down to sparsity  $p_{\max}\geq \frac{c}{n^{m}}$ with $k/2\leq m\leq k-1$.  We present our concentration and regularization results with two applications: (i) a randomized construction of  hypergraphs of bounded degrees with  good expander mixing properties,  (ii) concentration of sparsified tensors under uniform sampling.
\end{abstract}

\maketitle


\section{Introduction}
\label{sec1}

Tensors have been popular data formats in machine learning and network
analysis. The statistical models on tensors and the related algorithms have
been widely studied in the last ten years, including tensor decomposition
\cite{anandkumar2014tensor,ge2015escaping}, tensor completion
\cite{jain2014provable,montanari2018spectral}, tensor sketching
\cite{nguyen2015tensor}, tensor PCA
\cite{richard2014statistical,chen2019phase,arous2019landscape}, and community
detection on hypergraphs
\cite{kim2017community,ghoshdastidar2017consistency,pal2019community,cole2020exact}.
This raises the urgent demand for random tensor theory, especially the
concentration inequalities in a non-asymptotic point of view. There are
several concentration results of sub-Gaussian random tensors
\cite{tomioka2014spectral} and Gaussian tensors
\cite{auffinger2013random,richard2014statistical,nguyen2015tensor}. Recently
concentration inequalities for rank-$1$ tensor were also studied in
\cite{vershynin2019concentration} with application to the capacity of polynomial
threshold functions \cite{baldi2019polynomial}. In many of the applications
in data science, the sparsity of the random tensor is an important aspect.
However, there are only a few results for the concentration of order-$3$
sparse random tensors \cite{jain2014provable,lei2019consistent}, and not
much is known for general order-$k$ sparse random tensors.

Inspired by discrepancy properties in random hypergraph theory, we prove
concentration inequalities on sparse random tensors in the measurement
of the tensor spectral norm. Previous results for tensors include the concentration
of sub-Gaussian tensors and expectation bound on the spectral norm for
general random tensors \cite{tomioka2014spectral,nguyen2015tensor}. The
sparsity parameter does not appear in those bounds and directly applying
those results would not get the desired concentration for sparse random
tensors.

To simplify our presentation, we focus on real-valued order-$k$
$n\times \dots \times n$ tensors, while the results can be extended to
tensors with other dimensions. We denote the set of these tensors by
$\mathbb{R}^{n^{k}}$. We first define the Frobenius inner product and spectral
norm for tensors.

\begin{definition}[Frobenius inner product and spectral norm]
For order-$k$ $n\times \dots \times n$ tensors $T$ and $A$, the
\textit{Frobenius inner product} is defined by the sum of entrywise products:
\begin{align*}
\langle T, A\rangle := \sum _{i_{1},\dots ,i_{k}\in [n]} t_{i_{1},
\dots , i_{k}} a_{i_{1},\dots ,i_{k}},
\end{align*}
and the \textit{Frobenius norm} is defined by
$\|T\|_{F} := \sqrt{\langle T,T\rangle }$. Let
$x_{1}\otimes \dots \otimes x_{k}\in \mathbb{R}^{n^{k}}$ be the outer product
of vectors $x_{1},\dots , x_{k}\in \mathbb{R}^{n}$, i.e.,
\begin{equation*}
(x_{1}\otimes \dots \otimes x_{k})_{i_{1},\dots ,i_{k}}=x_{1,i_{1}}
\cdots x_{k,i_{k}}
\end{equation*}
for $i_{1},\dots ,i_{k}\in [n]$. Then the \textit{spectral norm} of
$T$ is defined by
\begin{align*}
\|T\|:&=\sup _{\|x_{1}\|_{2}=\cdots = \|x_{k}\|_{2}=1}|\langle T, x_{1}
\otimes \dots \otimes x_{k}\rangle |
\\
& = \sup _{\|x_{1}\|_{2}=\cdots = \|x_{k}\|_{2}=1} \left |\sum _{i_{1},
\dots ,i_{k}\in [n]} t_{i_{1},\dots ,i_{k}} x_{1,i_{1}}\cdots x_{k,i_{k}}
\right |.
\end{align*}
\end{definition}

For the ease of notation, we denote the Frobenius inner product between
a tensor $T$ and a tensor $x_{1}\otimes \dots \otimes x_{k}$ by
\begin{align*}
T(x_{1},\dots ,x_{k}):=\langle T,x_{1}\otimes \dots \otimes x_{k}
\rangle ,
\end{align*}
which can be seen as a multi-linear form on $x_{1},\dots , x_{k}$. It is
worth noting that $\|T\|\leq \|T\|_{F}$ since the following inequality
holds:
%
\begin{align}
\label{eq:Fnorm}
\|T\|& = \sup _{\|x_{1}\|_{2}=\cdots = \|x_{k}\|_{2}=1} |T(x_{1},
\dots ,x_{k})|\le \sup _{A:\|A\|_{F}\le 1}|\langle T,A\rangle | = \|T
\|_{F}.
\end{align}

In general, it is NP-hard to compute the spectral norm of tensors for
$k\geq 3$ \cite{hillar2013most}. However, it would be possible to show
the concentration of sparse random tensors in the measurement of the spectral
norm with high probability.

There have been many fruitful results on the concentration of random matrices,
including the sparse ones. We briefly discuss different proof techniques
and their difficulty and limitation for generalization to random tensors.
For sub-Gaussian matrices, an $\varepsilon $-net argument will quickly
give a desired spectral norm bound \cite{vershynin2018high}. For Gaussian
matrices, one could relate the spectral norm to the maximal of a certain
Gaussian process \cite{van2017spectral}. Another powerful way to derive
a good spectral norm bound for random matrices is called the high moment
method. Considering a centered $n\times n$ Hermitian random matrix
$A$, for any integer $k$, its spectral norm satisfies
$\mathbb{E}[\|A\|^{2k}]\leq \mathbb{E}[\text{tr}(A^{2k})]$. By taking
$k$ growing with $n$, if one can have a good estimate of
$\mathbb{E}[\textnormal{tr}(A^{2k})]$, it implies a good concentration bound
on $\|A\|$. It's well-known that computing the trace of a random matrix
is equivalent to counting a certain class of cycles in a graph. This type
of argument, together with some more refined modifications and variants
(e.g. estimating high moments for the corresponding non-backtracking operator),
is particularly useful for bounding the spectral norm of sparse random
matrices, see
\cite{vu2007spectral,bandeira2016sharp,benaych2017spectral,latala2018dimension,bordenave2018nonbacktracking}.

A different approach is called the Kahn-Szemer{\'{e}}di argument, which was
first applied to obtain the spectral gap of random regular graphs
\cite{friedman1989}, and was extensively applied to other random graph
models
\cite{broder1998optimal,lubetzky2011spectra,dumitriu2013functional,cook2018size,tikhomirov2019spectral,zhu2020second}.
In particular, this argument was used in
\cite{feige2005spectral,lei2015consistency} to estimate the largest eigenvalue
of sparse Erd\H{o}s-R\'{e}nyi graphs. Although using this method one cannot
obtain the exact constant of the spectral norm, it does capture the right
order on $n$ and the sparsity parameter $p$.

A natural question is how those methods can be applied to study the spectral
norm of random tensors. For sub-Gaussian random tensors of order $k$, the
$\varepsilon $-net argument would give us a spectral norm bound
$O(\sqrt{n})$ \cite{tomioka2014spectral}. However, the dependence on the
order $k$ might not be optimal, and it cannot capture the sparsity in the
sparse random tensor case. For Gaussian random tensors, surprisingly, none
of the above approaches could obtain a sharp spectral norm bound with the
correct constant. Instead, the exact asymptotic spectral norm was given
in \cite{auffinger2013random} using techniques from spin glasses. This
is also the starting point for a line of further research: tensor PCA and
spiked tensor models under Gaussian noise, see for example
\cite{richard2014statistical,lesieur2017statistical,chen2019phase,arous2019landscape}.
The tools from spin glasses rely heavily on the assumption of Gaussian
distribution and cannot be easily adapted to non-Gaussian cases.

One might try to develop a high moment method for random tensors. Unfortunately,
there is no natural generalization of the trace or eigenvalues for tensors
that match our cycle counting interpretation in the random matrix case.
Instead, by projecting the random tensor into a matrix form (including
the adjacency matrix, self-avoiding matrix, and the non-backtracking matrix
of a hypergraph), one could still apply the moment method to obtain some
information of the original tensor or hypergraph, see
\cite{lu2012loose,pal2019community,dumitriu2019spectra,angelini2015spectral}.
This approach is particularly useful for the study of community detection
problems on random hypergraphs. However, after reducing the adjacency tensor
into an adjacency matrix, there is a strict information loss and one could
not get the exact spectral norm information of the original tensor. Due
to the barrier of extending other methods to sparse random tensors, we
generalize the Kahn-Szemer{\'{e}}di argument to obtain a good spectral norm
bound when $p\geq \frac{c\log n}{n}$.

\section{Main results}
\label{sec2}

\subsection{Concentration}
\label{sec2.1}

Let $P=(p_{i_{1},\dots ,i_{k}})\in [0,1]^{n^{k}}$ be an order-$k$ tensor
and $T$ be a random tensor with independent entries such that
%
\begin{align}
\label{eq:assumptionT}
t_{i_{1},\dots ,i_{k}}\sim \text{Bernoulli}(p_{i_{1},\dots ,i_{k}}),
\quad \text{ where in particular, } P=\mathbb{E} T.
\end{align}
To control the sparsity of random tensor, we introduce the parameter for
maximal probability
\begin{equation*}
p:=p_{\max } :=\max _{i_{1},\dots ,i_{k}\in [n]} p_{i_{1},\dots ,i_{k}}.
\end{equation*}
Note that when $k=2$, $np$ is the maximal expected degree parameter in
\cite{feige2005spectral,lei2015consistency,le2017concentration}. For two
order-$k$ tensors $A,B\in \mathbb{R}^{n^{k}}$, define the Hadamard product
$A\circ B$ as
\begin{equation*}
(A\circ B)_{i_{1},\dots ,i_{k}}:=A_{i_{1},\dots ,i_{k}}B_{i_{1},
\dots ,i_{k}}.
\end{equation*}
Now we are ready to state our first main result, which is a generalization
of the case when $k=2$ in \cite{feige2005spectral,lei2015consistency} to
all $k\geq 2$.

\begin{theorem}
\label{thm:lei}
Let $k\geq 2$ be fixed. Let $A$ be a deterministic tensor of order
$k$, and $T$ be a random tensor of order $k$ with Bernoulli entries. Assume
$p\geq \frac{c \log n}{n} $ for some constant $c>0$. Then for any
$r>0$, there is a constant $C>0$ depending only on $r,c,k$ such that with
probability at least $1-n^{-r}$,
%
\begin{align}
\label{eq:Hadamardproductinequality}
\| A\circ T-\mathbb{E}[A\circ T] \|\leq C\sqrt{n p}\log ^{k-2} (n)
\max _{i_{1},\dots ,i_{k}}|A_{i_{1},\dots ,i_{k}}|.
\end{align}
\end{theorem}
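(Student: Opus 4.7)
The plan is to adapt the Kahn--Szemer\'{e}di discretization method that was used for sparse random graphs in \cite{feige2005spectral,lei2015consistency} to the tensor setting. First, by rescaling we may assume $\max_{i_1,\ldots,i_k} |A_{i_1,\ldots,i_k}| = 1$, and write $M := A \circ T - \mathbb{E}[A\circ T]$. I want to show that with high probability
\begin{equation*}
\sup_{x_1,\ldots,x_k \in S^{n-1}} |M(x_1,\ldots,x_k)| \leq C \sqrt{np}\,\log^{k-2}(n).
\end{equation*}
As a first step, I would replace the supremum over $(S^{n-1})^k$ by a supremum over a suitable grid: let $\mathcal{N}$ be the set of vectors $x \in S^{n-1}$ with entries in $\frac{1}{\sqrt{n}}\mathbb{Z}$, which has cardinality at most $e^{Cn}$ and approximates every unit vector with error $O(1/\sqrt{n})$ in a componentwise sense. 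A telescoping argument along each mode shows that $\|M\|$ is controlled, up to a constant, by $\sup_{x_1,\ldots,x_k \in \mathcal{N}} |M(x_1,\ldots,x_k)|$.

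With the supremum restricted to $\mathcal{N}^k$, I would partition the index set $[n]^k$, for fixed $(x_1,\ldots,x_k)$, into a \emph{light} part $\mathcal{L}$ consisting of indices where $|x_{1,i_1}\cdots x_{k,i_k}| \leq \sqrt{p/n^{k-1}}$, and a \emph{heavy} part $\mathcal{H}$ otherwise. Correspondingly decompose $M(x_1,\ldots,x_k) = L(x_1,\ldots,x_k) + H(x_1,\ldots,x_k)$. For the light contribution, each summand is bounded by $\sqrt{p/n^{k-1}}$, the variance is controlled by $p\sum_i x_{1,i_1}^2\cdots x_{k,i_k}^2 = p$, and Bernstein's inequality yields a tail bound of order $\exp(-c r n \log n)$ on $|L(x_1,\ldots,x_k)| \geq C\sqrt{np}$. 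Since $|\mathcal{N}|^k \leq e^{Ckn}$, a union bound absorbs the net cardinality provided the constant $c$ in $p \geq c\log n/n$ is large enough, or $C$ is taken large.

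The heavy contribution $H(x_1,\ldots,x_k)$ requires a combinatorial argument, and this is the main obstacle. The key tool is a \emph{hypergraph discrepancy lemma}: with high probability, for every tuple of index sets $I_1,\ldots,I_k \subseteq [n]$, the number of nonzero entries of $T$ in $I_1 \times \cdots \times I_k$ is either close to its expectation $p|I_1|\cdots|I_k|$, or else admits a bounded-degree type control coming from the fact that $T$ has at most a logarithmic number of ones along any slice when entries are sparse. Having such a discrepancy estimate in place, I would dyadically decompose each vector $x_j$ by the magnitude of its entries (buckets of the form $|x_{j,i}| \in [2^{-s-1}, 2^{-s}]$), so that $\mathcal{H}$ is split into $O(\log^k n)$ index boxes. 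On each box the heavy contribution is bounded by the discrepancy lemma applied to the corresponding sets. When $k=2$ this recovers the argument of \cite{feige2005spectral,lei2015consistency} with no extra log factor; for general $k$, balancing the two regimes of the discrepancy estimate across $k-2$ of the modes (the other two playing the role of the matrix case) accumulates the $\log^{k-2}(n)$ factor appearing in the statement. The hardest part will be formulating the discrepancy lemma in the right form for tensors and carrying out the iterative reduction so that the final factor is exactly $\log^{k-2}(n)$ rather than a larger power of $\log n$.
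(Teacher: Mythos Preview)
Your strategy matches the paper's: discretize to a grid, split into light and heavy tuples, handle the light part by Bernstein plus a union bound over the net, and handle the heavy part via a dyadic decomposition of each vector together with a hypergraph discrepancy lemma (the paper formulates this as Lemma~\ref{lem:bounddeg} and Lemma~\ref{lem:bounddiscrepancy}, then splits the heavy indices into six categories $\mathcal{C}_1,\dots,\mathcal{C}_6$ rather than doing an ``iterative reduction'' across modes). The $\log^{k-2}(n)$ loss in the paper comes specifically from bounding, in categories $\mathcal{C}_4$ and $\mathcal{C}_5$, a sum over $k-2$ of the dyadic scales by the trivial bound $|\{s_i\}|\le O(\log n)$ each.

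There is, however, one concrete error in your plan: the light/heavy threshold should be $\sqrt{np}/n=\sqrt{p/n}$, not $\sqrt{p/n^{k-1}}$. With your smaller threshold the light part is still fine, but the heavy part already fails at the deterministic step: the mean contribution over heavy indices is bounded only by
\[
\sum_{\mathcal{H}} |x_{1,i_1}\cdots x_{k,i_k}|\,p \;\le\; p\sum_{\mathcal{H}} \frac{x_{1,i_1}^2\cdots x_{k,i_k}^2}{|x_{1,i_1}\cdots x_{k,i_k}|}\;\le\; p\cdot\frac{1}{\sqrt{p/n^{k-1}}} \;=\; \sqrt{n^{k-1}p},
\]
which is far larger than $\sqrt{np}$ for $k\ge 3$; the paper's threshold gives exactly $\sqrt{np}$ here (see~\eqref{eq:heavyboundforp}). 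Two smaller points: the Bernstein tail you get on the light part is $\exp(-c'n)$, not $\exp(-crn\log n)$, and the sparsity assumption $p\ge c\log n/n$ plays no role in the light part --- it enters only through the bounded-degree lemma used in the heavy analysis.
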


\begin{remark}
From the proof of Theorem~\ref{thm:lei} in Section~\ref{sec:prooflei},
the constant $C$ in~\eqref{eq:Hadamardproductinequality} depends exponentially
on $k$ and linearly on $r$.
\end{remark}

In fact, we can provide a lower bound on the spectral norm as follows,
which shows~\eqref{eq:Hadamardproductinequality} is tight up to a polylog
factor.

\begin{theorem}%
\label{thm:lowerboundSpec}
Let $k\geq 2$ be fixed and $T$ be a random tensor of order $k$ with Bernoulli
entries and $\mathbb{E} T_{i_{1},\dots ,i_{k}}=p$ for
$i_{1},\dots ,i_{k}\in [n]$. Assume $p=o(1)$ and $np\to \infty $ as
$n\to \infty $. Then with high probability,
\begin{equation*}
\| T-\mathbb{E}T \|\geq \sqrt{n p}.
\end{equation*}
\end{theorem}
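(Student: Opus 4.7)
The plan is to lower-bound the spectral norm by the Frobenius norm (divided by $\sqrt{n^{k-1}}$) and then exploit the concentration of the Frobenius norm squared, which is a sum of independent bounded random variables. Setting $A := T - \mathbb{E}T$, for any fixed tuple $(i_2,\dots,i_k) \in [n]^{k-1}$, inserting the standard basis vectors $e_{i_2},\dots,e_{i_k}$ into the last $k-1$ slots and optimizing over $x_1$ gives
\[
\|A\|^2 \;\ge\; \sup_{\|x_1\|_2=1}|A(x_1,e_{i_2},\dots,e_{i_k})|^2 \;=\; \sum_{i_1=1}^{n} A_{i_1,i_2,\dots,i_k}^2.
\]
Since this lower bound holds for every choice of $(i_2,\dots,i_k)$, averaging over all $n^{k-1}$ tuples yields the key comparison
\[
\|A\|^2 \;\ge\; \frac{1}{n^{k-1}}\sum_{i_1,\dots,i_k} A_{i_1,\dots,i_k}^2 \;=\; \frac{\|A\|_F^2}{n^{k-1}}.
\]

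Next I would control $\|A\|_F^2 = \sum_{i_1,\dots,i_k}(T_{i_1,\dots,i_k}-p)^2$ from below by computing its first two moments. The summands are independent, bounded by $1$, each with mean $p(1-p)$ and variance at most $\mathbb{E}(T-p)^4 \le p$. Consequently $\mathbb{E}\|A\|_F^2 = n^k p(1-p)$ and $\mathrm{Var}(\|A\|_F^2) \le n^k p$. Chebyshev's inequality then gives, for any fixed $\varepsilon > 0$,
\[
\mathbb{P}\!\left(\|A\|_F^2 \le (1-\varepsilon)\,n^k p(1-p)\right) \;\le\; \frac{n^k p}{\varepsilon^2 \big(n^k p(1-p)\big)^2} \;=\; \frac{1}{\varepsilon^2\, n^k p\,(1-p)^2} \;\longrightarrow\; 0,
\]
using the hypothesis $np\to\infty$ (hence also $n^k p\to\infty$) and $p=o(1)$.

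Combining the two steps, with probability tending to $1$,
\[
\|T-\mathbb{E}T\| \;\ge\; \frac{\|A\|_F}{n^{(k-1)/2}} \;\ge\; \sqrt{(1-\varepsilon)\,np(1-p)} \;=\; (1-o(1))\sqrt{np},
\]
upon letting $\varepsilon\to 0$ slowly and using $p=o(1)$ to absorb the $(1-p)$ factor. This matches the upper bound of Theorem~\ref{thm:lei} up to the $\log^{k-2}n$ factor, confirming optimality. I do not anticipate any real obstacle: the Frobenius--spectral inequality is essentially tight for tensors whose mass is evenly spread across all $n^{k-1}$ fibers, which is the case here, and the concentration of $\|A\|_F^2$ follows from a one-line Chebyshev estimate. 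The only mild subtlety is verifying that the hypotheses $np\to\infty$ and $p=o(1)$ are enough to absorb both the Chebyshev error and the discrepancy between $p(1-p)$ and $p$, which they cleanly are.
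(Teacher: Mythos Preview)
Your argument is correct and elementary, but as written it delivers only $(1-o(1))\sqrt{np}$, not the exact inequality $\|T-\mathbb{E}T\|\ge\sqrt{np}$ claimed in the theorem. The Frobenius comparison $\|A\|^2\ge n^{-(k-1)}\|A\|_F^2$ is saturated precisely when all fibers have equal norm, and in that regime each side equals $np(1-p)$, strictly less than $np$. No amount of letting $\varepsilon\to 0$ slowly recovers the missing $(1-p)$ factor; the method itself caps out at $\sqrt{np(1-p)}$. For the purpose of showing order-optimality of Theorem~\ref{thm:lei} this is harmless, but it does not literally prove the stated inequality.

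The paper's proof is different and buys the missing constant for free. Instead of averaging over fibers, it freezes the last $k-2$ indices (say at $1$) and observes that
\[
\|T-\mathbb{E}T\|\ \ge\ \sup_{\|x\|_2=\|y\|_2=1}\bigl|(T-\mathbb{E}T)(x,y,e_1,\dots,e_1)\bigr|\ =\ \|M\|,
\]
where $M\in\mathbb{R}^{n\times n}$ has independent centered Bernoulli$(p)$ entries. One then invokes the known limiting singular value distribution of $M/\sqrt{np}$ (the quarter-circle law, valid once $np\to\infty$ and $p\to 0$), which places the top singular value at $(2-o(1))\sqrt{np}$. The factor $2$ comfortably clears the $\sqrt{np}$ threshold. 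So your route is more self-contained (only Chebyshev, no random matrix limit theorems) but loses a constant; the paper's route imports a heavier result but gets the sharper constant needed for the statement as phrased.
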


From~\eqref{eq:Fnorm}, when $p\geq \frac{c\log n}{n^{k-1}}$, a concentration
bound by Bernstein inequality on $\|T-\mathbb{E}T\|_{F}$ implies
$\|T-\mathbb{E} T\|=O(\sqrt{n^{k}p})$ with high probability. Applying tensor
unfolding, we can improve this bound and obtain concentration inequalities
for different sparsity ranges as follows.

\begin{theorem}
\label{thm:main}
Let $k\geq 2$ be fixed and $T$ be a random tensor defined in~\eqref{eq:assumptionT}. Assume $ p\geq \frac{c \log n}{n^{m}} $ for some
constant $c>0$ and an integer $m$ such that $k/2\leq m\leq k-1$. Then for
any $r>0$, there is a constant $C>0$ depending only on $r,c,k$ such that
with probability at least $1-n^{-r}$,
%
\begin{align}
\label{eq:unfoldeq1}
\| T-\mathbb{E}T \|\leq C\sqrt{n^{m} p}.
\end{align}
Assume $p\geq \frac{c \log n}{n^{m}}$ with $1\leq m< k/2$. Then there is
a constant $C>0$ depending on $r,c,k$ such that with probability at least
$1-n^{-r}$,
%
\begin{align}
\label{eq:unfoldeq2}
\|T-\mathbb{E}T\|\leq
\begin{cases}
C\sqrt{n^{m}p} \log ^{\frac{k-1}{m}-1}(n) & \text{if } k/m\not \in
\mathbb{Z},
\\
C\sqrt{n^{m}p} \log ^{\frac{k}{m}-2}(n) & \text{if } k/m \in \mathbb{Z}.
\end{cases}
\end{align}
\end{theorem}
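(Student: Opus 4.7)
The strategy is a reduction to Theorem~\ref{thm:lei} via tensor unfolding: whenever we partition the $k$ coordinate axes into $\ell$ ordered blocks $I_1,\dots,I_\ell$ of sizes $k_1,\dots,k_\ell$ with $\sum_j k_j=k$, the tensor $T$ is naturally identified with an order-$\ell$ tensor $\widetilde T$ of dimensions $n^{k_1}\times\cdots\times n^{k_\ell}$, whose entries are just a relabeling of those of $T$ and are therefore still independent Bernoullis with the same $p_{\max}$. For any unit vectors $x_1,\dots,x_k\in\R^n$, the vectors $y_j:=\bigotimes_{i\in I_j}x_i\in\R^{n^{k_j}}$ are unit and
\[
T(x_1,\dots,x_k)=\widetilde T(y_1,\dots,y_\ell),
\]
so taking suprema gives $\|T-\expval T\|\le\|\widetilde T-\expval\widetilde T\|$. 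Zero-padding $\widetilde T$ to a cubic order-$\ell$ tensor on $[N]^\ell$ with $N=\max_j n^{k_j}$ preserves its spectral norm and keeps $p_{\max}$ unchanged.

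For~\eqref{eq:unfoldeq1} with $k/2\le m\le k-1$, I take $\ell=2$ with block sizes $(m,k-m)$ and pad the resulting $n^m\times n^{k-m}$ matrix to an $n^m\times n^m$ Bernoulli matrix. The hypothesis $p\ge c\log n/n^m$ yields $p\ge (c/m)\log N/N$ with $N=n^m$, so Theorem~\ref{thm:lei} with $k=2$ applies and gives $\|T-\expval T\|\le C\sqrt{Np}=C\sqrt{n^m p}$, the log factor vanishing because $\log^{k-2}=1$ when $k=2$.

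For~\eqref{eq:unfoldeq2} with $k/m\in\mathbb Z$, I partition $[k]$ into $\ell=k/m$ equal blocks of size $m$, producing a cubic order-$\ell$ tensor on $[n^m]^\ell$. The sparsity condition becomes $p\ge (c/m)\log(n^m)/n^m$, so Theorem~\ref{thm:lei} with $n\mapsto n^m$ and $k\mapsto\ell$ yields $C\sqrt{n^m p}\log^{\ell-2}(n^m)=O(\sqrt{n^m p}\log^{k/m-2}n)$. For $k/m\notin\mathbb Z$, set $\ell=\lceil k/m\rceil$ and use blocks of sizes $m,\dots,m,s$ with $s=k-m(\ell-1)\in\{1,\dots,m-1\}$; pad the last dimension to $n^m$ and apply Theorem~\ref{thm:lei} identically.

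The main obstacle is pinning down the logarithmic exponent in the non-integer case. My straightforward partition yields exponent $\lceil k/m\rceil-2$, which is dominated by the target exponent $(k-1)/m-1$ (with equality iff $k\equiv 1\pmod m$), so it already suffices for the stated bound; obtaining a genuinely sharper exponent would require re-opening the proof of Theorem~\ref{thm:lei} and tracking how the $\varepsilon$-net and dyadic-scale arguments accumulate logarithmic losses mode by mode rather than uniformly. The remaining steps—zero-padding, relabeling indices, and tracking the $(r,c,k)$-dependence of constants—are routine bookkeeping.
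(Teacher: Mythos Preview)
Your proposal is correct and follows essentially the same route as the paper: unfold $T$ along a partition into blocks of size $m$ (with one smaller block when $m\nmid k$), pad to a cubic tensor on $[n^m]^\ell$, and invoke Theorem~\ref{thm:lei}. The paper likewise gets exponent $\ell-1=\lceil k/m\rceil-2$ in the non-integer case and then simply bounds it by $(k-1)/m-1$, so your ``obstacle'' is not an obstacle at all---the stated exponent in~\eqref{eq:unfoldeq2} is already a relaxation of what the unfolding argument actually yields.
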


\begin{remark}%
\label{rmk:fiber}
In~\eqref{eq:Hadamardproductinequality}, the factor $\sqrt{np}$ corresponds
to the Euclidean norm of a fiber (a vector obtained by fixing all but one
indices in a tensor) of $T$. The same applies in Theorem~\ref{thm:main} after tensor unfolding. The factor $\sqrt{n^{m}p}$ in~\eqref{eq:unfoldeq1} corresponds to the Euclidean norm of a row in the
unfolded $n^{k-m} \times n^{m}$ matrix form of $ T$. Similarly, the same
factor in~\eqref{eq:unfoldeq2} corresponds to the Euclidean norm of an
$n^{m}$-dimensional fiber in an unfolded tensor form of $T$.
\end{remark}

\begin{remark}
When $p=\frac{c\log n}{n^{m}}$ with $1\leq m\leq k-1$, the inequalities
in Theorem~\ref{thm:main} are tight up to polylog factors. Note that~\eqref{eq:unfoldeq1} and~\eqref{eq:unfoldeq2} imply
$\|T-\mathbb{E}T \|\leq C\sqrt{c} \log ^{k-1/2}(n)$. On the other hand,
when $\frac{c\log n}{n^{k-1}}\leq p\leq \frac{1}{2}$, with high probability
the random tensor $T$ has at least one non-zero entry. By the inequality
$\|T-\mathbb{E}T\|\geq \max _{i_{1},\dots ,i_{k}}|T_{i_{1},\dots ,i_{k}}-p_{i_{1},
\dots ,i_{k}}|$, we have $\|T-\mathbb{E}T\|\geq 1/2$ with high probability.
\end{remark}

\subsection{Minimax lower bound}
\label{sec2.2}

Consider the problem of constructing an estimator of $\mathbb{E}T$ under
the spectral norm based on $T$. We show that the high probability bound
in Theorem~\ref{thm:lei} is optimal up to the logarithm term in the minimax
sense.

\begin{theorem}%
\label{thm:minimax}
Suppose we observe a Bernoulli random tensor $T$ with independent entries
and $\mathbb{E}T = \theta $ for $\theta \in [0,p]^{n^{k}}$, where
$p\in (0,1]$ and $n\ge 16$. Then there exists constants
$c_{1},c_{2}>0$ only depending on $k$ such that
\begin{align*}
\inf _{\hat{\theta }}\sup _{\theta \in [0,p]^{n^{k}}}\mathbb{P}\left (\|
\hat{\theta }- \theta \|\ge (c_{1}\sqrt{np})\wedge (c_{2} n^{k/2}p)
\right )\ge \frac{1}{3},
\end{align*}
where the infimum is taken over all functions
$\hat{\theta }: \mathbb{R}^{n^{k}}\to \mathbb{R}^{n^{k}}$,
$T\mapsto \hat{\theta }(T)$. In particular, if
$p\ge \frac{c\log n}{n}$, then there exists constant $c_{3}>0$ only depending
on $k$ and $c$ such that
\begin{align*}
\inf _{\hat{\theta }}\sup _{\theta \in [0,p]^{n^{k}}}\mathbb{P}\left (\|
\hat{\theta }- \theta \|\ge c_{3}\sqrt{np} \right )\ge \frac{1}{3}.
\end{align*}
\end{theorem}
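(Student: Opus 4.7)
The plan is to apply Fano's inequality with a packing of rank-one perturbations of a constant tensor, exploiting the fact that a tensor of the form $v\otimes \mathbf{1}^{\otimes(k-1)}$ has spectral norm $\|v\|_2 \cdot n^{(k-1)/2}$ yet only perturbs each entry by $\|v\|_\infty$. Rank one is the crucial choice because it maximizes the spectral-to-$\ell_\infty$ ratio, letting us separate hypotheses in operator norm while keeping the per-coordinate Bernoulli KL divergence small.

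I would first invoke a Gilbert--Varshamov type bound, valid for $n\ge 16$, to obtain a subset $\mathcal{V}\subseteq \{-1,+1\}^n$ of cardinality $N\ge 2^{n/8}$ with pairwise Hamming distance at least $n/4$. For a parameter $\delta>0$ to be chosen and each $v\in \mathcal{V}$, set
$$\theta_v \defeq \tfrac{p}{2}\,\mathbf{1}^{\otimes k} + \delta\, v\otimes \mathbf{1}^{\otimes (k-1)}.$$
Provided $\delta \le p/4$, every entry of $\theta_v$ lies in $[p/4,\, 3p/4]\subseteq [0,p]$, so $\theta_v\in [0,p]^{n^k}$. Because the spectral norm of a rank-one tensor factorizes, for $v\ne v'\in \mathcal{V}$,
$$\|\theta_v - \theta_{v'}\| = \delta\,\|v-v'\|_2\, n^{(k-1)/2} \ge \delta\sqrt{n}\, n^{(k-1)/2} = \delta\, n^{k/2},$$
using $\|v-v'\|_2 = 2\sqrt{d(v,v')} \ge \sqrt{n}$. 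Standard Bernoulli KL estimates then give $\kl{\mathrm{Ber}(\theta_{v,\mathbf{i}})}{\mathrm{Ber}(\theta_{v',\mathbf{i}})} \le C \delta^2/p$ at each multi-index $\mathbf{i}$, since the two entries lie in $[p/4,3p/4]$ and differ by at most $2\delta$; summing over the $n^k$ independent coordinates yields $\kl{P_{\theta_v}}{P_{\theta_{v'}}} \le C n^k \delta^2/p$.

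Choosing $\delta := \eta \min\!\left(\sqrt{p/n^{k-1}},\, p\right)$ for a small absolute constant $\eta>0$ then makes this KL divergence at most $\tfrac{1}{3}\log N$ while preserving a spectral separation $\delta\, n^{k/2} = \eta \min(\sqrt{np},\, n^{k/2}p)$. Standard Fano gives
$$\inf_{\hat\theta}\sup_{v\in \mathcal{V}} \prob\!\left(\|\hat\theta - \theta_v\|\ge \tfrac{1}{2}\delta n^{k/2}\right) \ge 1 - \frac{\max_{v,v'}\kl{P_{\theta_v}}{P_{\theta_{v'}}} + \log 2}{\log N} \ge \tfrac{1}{3},$$
which is the first assertion with $c_1, c_2 = \eta/2$. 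For the second assertion, $p\ge c\log n/n$ forces $p \ge 1/n^{k-1}$ once $n$ exceeds a constant depending on $c,k$, so the minimum collapses to $\sqrt{np}$ and $c_3$ absorbs the dependence on $k$ and $c$.

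The principal obstacle, beyond tracking the constants in Fano, is showing that a single rank-one construction handles both regimes at once: in the dense range $p\gtrsim 1/n^{k-1}$ the KL constraint $\delta\lesssim \sqrt{p/n^{k-1}}$ is binding, whereas in the sparse range $p\lesssim 1/n^{k-1}$ the positivity constraint $\delta\le p/4$ takes over, and this crossover is exactly what produces the $\sqrt{np}\wedge n^{k/2}p$ shape in the lower bound.
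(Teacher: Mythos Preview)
Your proposal is correct and follows the same strategy as the paper: Fano's inequality applied to a Gilbert--Varshamov packing of rank-one perturbations $\theta = \tfrac{p}{2}J + (\text{rank-one term})$ of the constant tensor. The only substantive difference is in parametrization: you fix the support to be all of $[n]^{k-1}$ and vary the entrywise magnitude $\delta$, taking $\delta = \eta\min(\sqrt{p/n^{k-1}},\,p)$ to handle the two regimes; the paper instead fixes the magnitude at $p/30$ and varies the support, using $W$ equal to the indicator of an $m\times\cdots\times m$ block with $m = \lfloor p^{-1/(k-1)}\rfloor \wedge n$, so that $m^{k-1}p\le 1$ automatically controls the KL and the minimum in $m$ produces the $\sqrt{np}\wedge n^{k/2}p$ crossover. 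Both choices yield the same separation/KL tradeoff and the same final bound; your version is slightly more direct, while the paper's avoids having to impose the positivity constraint $\delta\le p/4$ separately. One small point: the Gilbert--Varshamov bound you cite with Hamming distance $n/4$ and size $2^{n/8}$ is a slightly different version than the paper's (distance $n/8$), but either suffices up to constants.
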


This theorem implies that in Theorem~\ref{thm:lei},
$\sqrt{np}\log ^{k-2}(n)$ cannot be replaced by other terms with order
$o(\sqrt{np})$ at least when all the entries of $A$ are one. Hence, the
upper bound is tight when $k=2$ and tight up to a logarithm factor when
$k>2$. More generally, even if we consider all functions
$\hat{\theta }: \mathbb{R}^{n^{k}}\to \mathbb{R}^{n^{k}}$,
$T\mapsto \hat{\theta }(T)$, $\|\hat{\theta }(T)-\mathbb{E}T\|$ has no high
probability bound tighter than $O(\sqrt{np})$. Therefore it is stronger
than Theorem~\ref{thm:lowerboundSpec}.

\subsection{Regularization}
\label{sec2.3}

Regularization of random graphs was first studied in
\cite{feige2005spectral}. It was proved in \cite{feige2005spectral} that
by removing high-degree vertices from a random graph, one can recover the
concentration under the spectral norm because the $O(\sqrt{np})$ concentration
breaks down when $np=O(1)$, due to the appearance of high-degree vertices,
see
\cite{krivelevich2003largest,le2017concentration,benaych2019largest}. A
data-driven threshold for finding high degree vertices for the stochastic
block model can be found in~\cite{zhou2019analysis}. A different regularization
analysis was given in \cite{le2017concentration} by decomposing the adjacency
matrix into several parts and modify a small submatrix. This method was
later generalized to other random matrices in
\cite{rebrova2018norms,Rebrova2019}. In this section we consider the regularization
for Bernoulli random tensors.

Let
$d_{i_{1},\dots ,i_{k-1}}:=\sum _{i_{k}\in [n]} t_{i_{1},i_{2},\dots ,
i_{k}}$ be the degree of the tuple $(i_{1},\dots , i_{k-1})$. When $p=\frac{c\log n}{n}$, the maximal degree
of all possible $(i_{1},\dots , i_{k-1})$ tuples is $O(np)$, and the bounded
degree property of the random hypergraph holds (see Lemma~\ref{lem:bounddeg}). When $p=o\left (\frac{\log n}{n}\right )$, the maximal
degree of all possible $(k-1)$-tuples is no longer $O(np)$, and our proof
techniques of Theorem~\ref{thm:lei} will fail in this regime without the
bounded degree property. In fact, when $p=\frac{c}{n}$ and
$\mathbb{E}T=pJ$, define a matrix $A\in \mathbb{R}^{n\times n}$ such that
$a_{i_{1},i_{2}}=t_{i_{1},i_{2},1,\dots ,1}-p$. According to
\cite{feige2005spectral}, we have
$\|T-\mathbb{E}T\|\geq \| A-\mathbb{E}A \|=\omega (\sqrt{np})$. We can see
that $\|T-\mathbb{E}T\|$ can not be bounded by $O(\sqrt{np})$ with high
probability in this regime, and the high $(k-1)$-order degrees destroy
the concentration. In general, define
%
\begin{align}
\label{eq:tupledegree}
d_{i_{1},\dots ,i_{k-m}}:=\sum _{i_{k-m+1},\dots , i_{k}\in [n]} t_{i_{1},i_{2},
\dots , i_{k}}
\end{align}
to be the degree of the tuple $(i_{1},\dots , i_{k-m})$, when
$ p =o\left (\frac{\log n}{n^{m}}\right )$ with $1\leq m<k/2$, the degrees
of all $(k-m)$-tuples fail to concentrate and our proof for~\eqref{eq:unfoldeq2} will not work in this regime. We conjecture that removing
the $(k-m)$-tuples could be a possible way to recover the concentration
of the spectral norm, but the proof of this conjecture is beyond the scope
of this paper. One obstacle is that too many tuples are removed, which
creates a large error in $\|\mathbb{E}T -\mathbb{E}\hat{T}\|$, where
$\hat{T}$ is the tensor after removing tuples of high degrees. Another
obstacle is the probability estimate. similar to
\cite{feige2005spectral,le2017concentration}, we need to take a union bound
to estimate failure probability over $2^{n^{k-m}}$ many possible sets of
removed tuples. With the union bound argument, we fail to bound the spectral
norm with high probability.

When $p\geq \frac{c\log n}{n^{m}}$ for $k/2\leq m\leq k-1$, we can still
analyze the regularization procedure by using tensor unfolding. In the
proof of Theorem~\ref{thm:main}, we have obtained
$\|T-\mathbb{E}T\|=O(\sqrt{n^{m}p})$ with high probability. The main proof
idea is that the spectral norm of $(T-\mathbb{E}T)$ can be bounded by the
spectral norm of an $n^{m}\times n^{m}$ matrix representation of
$(T-\mathbb{E}T)$, which we denote it $M$ for now. When
$p= o\left ( \frac{\log n}{n^{m}}\right )$, this tensor unfolding argument
would fail because the random matrix $M$ does not have spectral norm
$O(\sqrt{n^{m}p})$. If we regularize this matrix $M$ by removing high degree
vertices, it will still provide an upper bound on the spectral norm for
$T-\mathbb{E}T$. This is a sufficient way to obtain a spectral norm bound
when $p\geq \frac{c}{n^{m}}$. In terms of the tensor structure, before
regularization, the $(k-m)$-order degrees fail to concentrate. But after
regularization, each degree $\hat{d}_{i_{1},\dots , i_{k-m}}$ in the regularized
tensor $\hat{T}$ is bounded by $2\sqrt{n^{m} p}$, which guarantees that
the unfolded tensor is concentrated under the spectral norm.

We adapt the techniques from
\cite{feige2005spectral,le2017concentration}, together with the tensor
unfolding operation, and apply it to an inhomogeneous random directed hypergraph,
whose adjacency tensor has independent entries. This allows us to generalize
the concentration inequalities in
\cite{feige2005spectral,le2017concentration} for regularized inhomogeneous
random directed graphs with the same probability estimate. Based on different
ranges of sparsity, our regularization procedures are slightly different,
which depend on the boundedness property for different orders of degrees.

We now introduce some definitions for hypergraphs before stating the regularization
procedure.

\begin{definition}[Hypergraph]%
\label{def:hypergraph}
A \textit{hypergraph} $H$ consists of a set $V$ of \textit{vertices} and
a set $E$ of \textit{hyperedges} such that each hyperedge is a nonempty
set of $V$. $H$ is \textit{$k$-uniform} if every hyperedge $e\in E$ contains
exactly $k$ vertices. The \textit{degree} of a vertex $i$ is the number
of all hyperedges incident to $i$.
\end{definition}

Let us index the vertices by $V=\{1,\dots , n\}$. A $k$-uniform hypergraph
can be represented by order-$k$ tensor with dimension
$n\times \dots \times n$.

\begin{definition}[Adjacency tensor]%
\label{def:adj:tensor}
Given a $k$-uniform hypergraph $H$, an order-$k$ tensor $T$ is the
\textit{adjacency tensor} of $H=(V,E)$ if
\begin{align*}
t_{i_{1}, \dots , i_{k}} =
\begin{cases}
1, & \text{ if } \{i_{1},\dots , i_{k}\}\in E,
\\
0, &\text{ otherwise. }
\end{cases}
\end{align*}
For any adjacency tensor $T$,
$t_{i_{\sigma (1)},\dots ,i_{\sigma (k)}} = t_{i_{1},\dots ,i_{k}}$ for
any permutation $\sigma $. We may abuse notation and write $t_{e}$ in place
of $t_{i_{1},\dots ,i_{k}}$, where $e = \{i_{1},\dots ,i_{k}\}$. A
\textit{$k$-uniform directed hypergraph} $H=(V,E)$ consists of a set
$V$ of \textit{vertices} and a set $E$ of \textit{directed hyperedges} such
that each directed hyperedge is an element in
$ V\times \cdots \times V=V^{k}$. Define $T$ to be the adjacency tensor of
$H$ such that
\begin{align*}
t_{i_{1}, \dots , i_{k}} =
\begin{cases}
1, & \text{ if } (i_{1},\dots ,i_{k})\in E,
\\
0, &\text{ otherwise. }
\end{cases}
\end{align*}
\end{definition}

For different orders of sparsity in terms of $m$, our regularization procedure
is based on the boundedness of $(k-m)$-th order degrees in the random directed
hypergraph. Assume $p\geq \frac{c}{n^{m}}$ with $k/2\leq m\leq k-1$. For
any order-$k$ tensor $A$ indexed by $[n]$, let $S\subset [n]^{k-m}$ be
a subset of indices. We define the regularized tensor $A^{S}$ as
\begin{align*}
a_{i_{1},\dots , i_{k}}^{S}=
\begin{cases}
0 & \text{if } (i_{1},\dots ,i_{k-m})\in S,
\\
a_{i_{1},\dots , i_{k}} & \text{otherwise}.
\end{cases}
\end{align*}

When we observe a random tensor $T$, we regularize $T$ as follows. Suppose
the degree of a $(k-m)$-tuple $(i_{1},\dots ,i_{k-m})$ is greater than
$2n^{m}p$, then we remove all directed hyperedges containing this tuple.
In other words, we zero out the corresponding hyperedges in the adjacency
tensor. We call the resulting tensor $\hat{T}$. Let
$\tilde{S}\subset [n]^{k-m}$ be the set of $(k-m)$-tuples with degree greater
than $2n^{m}p$. Then with our notation,
%
\begin{align}
\label{eq:regularizationdef}
\hat{T}=T^{\tilde{S}}.
\end{align}

Since from our Theorem~\ref{thm:main}, when
$p\geq \frac{c\log n}{n^{m}}$ for some $c>0$, the regularization is not
needed, below we assume $p<\frac{\log n}{n^{m}}$ for simplicity. The following
lemma shows that with high probability, not many $(k-m)$-tuples are removed.

\begin{lemma}%
\label{lem:num:regularized}
Let $\frac{c}{n^{m}}\le p<\frac{\log n}{n^{m}}$ for a sufficiently large
$c>1$ and an integer $m$ with $k/2 \leq m\leq k-1$. Then the number of
regularized $(k-m)$-tuples $|\tilde{S}|$ is at most
$\frac{1}{n^{2m-k}p}$ with probability at least
$1-\exp \left (-\frac{n^{k-m}}{6\log n}\right )$.
\end{lemma}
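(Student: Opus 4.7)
\medskip
\noindent
\textbf{Proof plan.} The key structural observation is that, because the assumption \eqref{eq:assumptionT} makes the entries of $T$ independent and $d_{I}=\sum_{J\in [n]^{m}}t_{(I,J)}$ for each $(k-m)$-tuple $I\in [n]^{k-m}$ only uses the entries indexed by $(I,J)$, the degrees $\{d_{I}\}_{I\in [n]^{k-m}}$ depend on pairwise disjoint blocks of entries of $T$ and are therefore \emph{jointly independent}. Consequently the indicators $\{\mathbf{1}\{d_{I}>2n^{m}p\}\}_{I}$ are independent Bernoulli variables, and $|\tilde{S}|$ is itself a sum of $n^{k-m}$ independent Bernoullis. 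This turns what looks like a union bound over $n^{k-m}$ correlated events into a sharp two-stage Chernoff argument.

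\medskip
\noindent
The plan is then to apply Chernoff twice. First, for a fixed $I$, $d_{I}$ is a sum of $n^{m}$ independent Bernoullis with mean $\mu_{I}\le n^{m}p$, so a standard multiplicative Chernoff (or Bennett) bound gives a universal $\alpha>0$ with
\[
q:=\sup_{I}\mathbb{P}(d_{I}>2n^{m}p)\le \exp(-\alpha n^{m}p),
\]
for example $\alpha=2\ln 2 -1$, obtained by optimizing over $\mu_{I}\in(0,n^{m}p]$ (the worst case is $\mu_{I}=n^{m}p$). Set $B:=1/(n^{2m-k}p)=n^{k-m}/(n^{m}p)$. Since $|\tilde{S}|$ is a sum of $n^{k-m}$ independent Bernoullis of parameter at most $q$, the Poisson-tail Chernoff bound $\mathbb{P}(X\ge t)\le (e\,\mathbb{E}X/t)^{t}$ (valid once $t>e\,\mathbb{E}X$, which follows from the $c$ large hypothesis below) yields
\[
\mathbb{P}(|\tilde{S}|\ge B)\le \Bigl(\frac{e\,n^{k-m}q}{B}\Bigr)^{B}=\bigl(e\,n^{m}p\cdot q\bigr)^{B}\le \bigl(e\,n^{m}p\,\exp(-\alpha n^{m}p)\bigr)^{B}.
\]

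\medskip
\noindent
It remains to simplify the tail. Writing $\nu:=n^{m}p\in[c,\log n)$, one checks that for $c$ large enough the elementary inequality $e\nu\,e^{-\alpha\nu}\le e^{-\alpha\nu/2}$ holds, so the bound becomes $\exp(-\alpha\nu B/2)=\exp(-\alpha n^{k-m}/2)$. Using $\nu<\log n$, we have $\alpha/2\ge 1/(6\log n)$ for $n$ sufficiently large (in fact for essentially all $n\ge 2$), hence
\[
\mathbb{P}(|\tilde{S}|\ge B)\le \exp(-\alpha n^{k-m}/2)\le \exp\bigl(-n^{k-m}/(6\log n)\bigr),
\]
which is the claim. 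The only real technical point, and the main place care is needed, is fixing the absolute constant $c$ in the hypothesis $p\ge c/n^{m}$ large enough to absorb the polynomial prefactors $e\nu$ arising in both Chernoff applications; everything else is routine. The crucial insight is purely structural — the independence of the degrees $d_{I}$ across tuples $I$ — without which a union bound over $2^{n^{k-m}}$ subsets (as mentioned in the preceding discussion of the paper) would be unavoidable and would fail to produce a high-probability estimate.
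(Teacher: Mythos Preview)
Your proposal is correct and follows essentially the same two-stage argument as the paper: first bound $\mathbb{P}(d_{I}>2n^{m}p)$ for a single tuple via a Chernoff-type inequality (the paper uses Bernstein to get the exponent $3n^{m}p/8$, you use the multiplicative Chernoff/Bennett form to get $\alpha=2\ln 2-1$), then exploit the independence of the degrees $\{d_{I}\}_{I}$ to apply a second Chernoff bound to $|\tilde{S}|$ (the paper uses the form $\exp(-\lambda^{2}\mu/(2+\lambda))$ with a specific choice of $\lambda$, you use the Poisson-tail form $(e\mathbb{E}X/t)^{t}$). The only cosmetic difference is that your final bound $\exp(-\alpha n^{k-m}/2)$ is in fact uniformly stronger than the stated $\exp(-n^{k-m}/(6\log n))$, so the phrase ``using $\nu<\log n$'' in your last step is unnecessary; otherwise the arguments are interchangeable.
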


After regularization, the following holds.
%
\begin{theorem}%
\label{thm:regularize}
Let $\frac{c}{n^{m}}\le p<\frac{\log n}{n^{m}}$ for a sufficiently large
$c>1$ and an integer $m$ with $k/2 \leq m\leq k-1$. Let $\hat{T}$ be the
random order-$k$ tensor $T$ after regularization. Then for any $r>0$, there
exists a constant $C_{2}$ depending on $k,r$ such that
%
\begin{align}
\label{eq:regularization2}
\mathbb{P}\left ( \|\hat{T} - \mathbb{E} T\| \le C_{2}\sqrt{n ^{m}p}
\right )\geq 1-n^{-r}.
\end{align}
\end{theorem}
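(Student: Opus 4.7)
The plan is to reduce Theorem~\ref{thm:regularize} to a concentration statement for a regularized sparse rectangular random matrix obtained by unfolding $T$, and then adapt the Kahn--Szemer\'edi argument of Feige--Ofek and Le--Levina--Vershynin to that matrix.

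First, I would unfold $T$ to the matrix $M\in \mathbb{R}^{n^{k-m}\times n^{m}}$ whose row index $I=(i_{1},\dots,i_{k-m})\in [n]^{k-m}$ records the first $k-m$ tensor modes and whose column index $J=(i_{k-m+1},\dots,i_{k})\in [n]^{m}$ records the last $m$; define $\hat M$ similarly from $\hat T$. Any rank-one tensor $x_{1}\otimes \cdots \otimes x_{k}$ factors as an outer product of the unit vectors $x_{1}\otimes \cdots \otimes x_{k-m}$ and $x_{k-m+1}\otimes \cdots \otimes x_{k}$, so the tensor spectral norm is bounded by the matrix operator norm of this unfolding,
\[
\|\hat T - \mathbb{E}T\| \le \|\hat M - \mathbb{E}M\|_{\mathrm{op}}.
\]
Under the unfolding the $(k-m)$-tuple degree $d_{i_{1},\dots,i_{k-m}}$ is exactly the sum of row $I$ of $M$, the regularization set $\tilde S$ is exactly the set of rows of $M$ whose sum exceeds $2n^{m}p$, and hence every row of $\hat M$ has sum at most $2n^{m}p$.

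Second, I would bound $\|\hat M - \mathbb{E}M\|_{\mathrm{op}}$ by a standard $\frac{1}{4}$-net argument on the unit spheres of $\mathbb{R}^{n^{k-m}}$ and $\mathbb{R}^{n^{m}}$, combined with a light/heavy decomposition of the bilinear form $\sum_{I,J}u_{I}v_{J}(\hat M_{I,J}-\mathbb{E}M_{I,J})$. The light pairs, those with $|u_{I}v_{J}|\le \sqrt{p/n^{m}}$, are controlled by Bernstein's inequality; since $\sum_{I,J}u_{I}^{2}v_{J}^{2}=1$, the variance is $O(p)$ and the bounded-entry term $B=\sqrt{p/n^{m}}$ is precisely tuned so that the deviation $C\sqrt{n^{m}p}$ dominates $\log(|\mathcal N_{1}||\mathcal N_{2}|)\lesssim n^{m}$. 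The heavy pairs are handled via the bounded-discrepancy / dyadic level-set argument of Feige--Ofek: for any $U\subset[n]^{k-m}$, $V\subset[n]^{m}$, the edge count satisfies $e_{\hat M}(U,V)\le 2n^{m}p\,|U|$ from the row-sum regularization, together with the standard discrepancy bound $e_{M}(U,V)\le C(p|U||V|+\sqrt{p|U||V|\log\frac{en^{m}}{|V|}}+\log\frac{en^{m}}{|V|})$ from Bernstein. Partitioning $u,v$ dyadically by magnitude and summing over the resulting level-set pairs yields a heavy-pair contribution also of order $\sqrt{n^{m}p}$.

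The main obstacle is that $\tilde S$ depends on $T$, so the entries of $\hat M$ are not independent Bernoulli and Bernstein cannot be applied directly to $\hat M-\mathbb{E}M$. Following Le--Levina--Vershynin, the workaround is to establish the discrepancy and light/heavy estimates uniformly over all \emph{deterministic} choices of zero-row set $\mathcal N\subset [n]^{k-m}$ of size at most $1/(n^{2m-k}p)$, and then invoke Lemma~\ref{lem:num:regularized} to guarantee that $\tilde S$ lies in that range with the required probability. The bias $\|\mathbb{E}\hat M -\mathbb{E}M\|_{\mathrm{op}}$ introduced by zeroing out rows is controlled on the event of Lemma~\ref{lem:num:regularized} by the Frobenius norm of the removed mass, which is at most $\sqrt{|\tilde S|\cdot n^{m}p^{2}}\le \sqrt{n^{k-m}p}\le \sqrt{n^{m}p}$ since $m\ge k/2$. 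A final union bound over the two $\varepsilon$-nets and over candidate sets $\mathcal N$ (which costs a factor of at most $\binom{n^{k-m}}{|\tilde S|}\le n^{k-m}\cdot|\tilde S|$ in logarithm, absorbed in the choice of constants) delivers the high-probability bound $1-n^{-r}$.
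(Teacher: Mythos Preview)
Your reduction is exactly the paper's: unfold $T$ along the $2$-partition $\{\{1,\dots ,k-m\},\{k-m+1,\dots ,k\}\}$ to an $n^{k-m}\times n^{m}$ matrix, observe that the tensor regularization is precisely the removal of high-degree rows of that matrix, and use $\|\hat T-\mathbb E T\|\le \|\hat M-\mathbb E M\|_{\mathrm{op}}$ from Lemma~\ref{lem:unfolding}. The only difference is that the paper's proof (Section~\ref{sec:regularization}) is three lines: after embedding the unfolding into an $n^{m}\times n^{m}$ square matrix $A$, it simply invokes Theorem~2.1 of Le--Levina--Vershynin \cite{le2017concentration} as a black box to obtain $\|\hat A-\mathbb E A\|\le C\sqrt{n^{m}p}$, whereas you propose to reprove that matrix result from scratch via the Feige--Ofek light/heavy machinery. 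Your outline of that argument is sound and would work.

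One caution on your last paragraph: the entropy estimate ``$\binom{n^{k-m}}{|\tilde S|}\le n^{k-m}\cdot |\tilde S|$ in logarithm'' is too loose at the boundary $m=k/2$. There $|\tilde S|$ can be as large as $n^{k-m}/c=n^{m}/c$, and the bound $\log\binom{n^{k-m}}{|\tilde S|}\le |\tilde S|\,(k-m)\log n$ gives order $\frac{n^{m}\log n}{c}$, which beats the Bernstein exponent $\Theta(n^{m})$ for any fixed $c$. Either use the sharper estimate $\log\binom{N}{s}\le s\log(eN/s)$, which here is $\frac{n^{k-m}}{c}\log(ec)=O(n^{k-m})\le O(n^{m})$ and is absorbed for $c$ large, or---cleaner, and closer to what \cite{le2017concentration,feige2005spectral} actually do---avoid the union bound over $\mathcal N$ altogether: prove the discrepancy and light-pair bounds once for the unregularized matrix $M$ (they are $\mathcal N$-free), note that $e_{\hat M}(U,V)\le e_{M}(U,V)$ so the heavy-pair bound is inherited deterministically, and use the bounded row sums of $\hat M$ (true by construction of $\tilde S$) in place of Lemma~\ref{lem:bounddeg}.
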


\section{Applications}
\label{sec3}

To demonstrate the usefulness of our concentration and regularization results,
we highlight two applications.

\subsection{Sparse hypergraph expanders}
\label{sec:hypergraphexpander}

The expander mixing lemma for a $d$-regular graph (the degree of each vertex
is $d$) states the following: Let $G$ be a $d$-regular graph on $n$ vertices
with the second largest eigenvalue in absolute value of its adjacency matrix
satisfying $\lambda :=\max \{\lambda _{2},|\lambda _{n}|\}<d$. For any
two subsets $V_{1}, V_{2} \subseteq V(G)$, let
\begin{equation*}
e(V_{1},V_{2})=|\{(v_{1},v_{2})\in V_{1}\times V_{2}: \{v_{1},v_{2}\}
\in E(G) \}|
\end{equation*}
be the number of edges between $V_{1}$ and $V_{2}$. Then
%
\begin{align}
\label{eq:expandermixing}
\left | e(V_{1},V_{2})-\frac{d|V_{1}||V_{2}|}{n}\right | \leq
\lambda \sqrt{|V_{1}| |V_{2}|\left (1-\frac{|V_{1}|}{n}\right )
\left (1-\frac{|V_{2}|}{n}\right )}.
\end{align}\eqref{eq:expandermixing} shows that $d$-regular graphs with small
$\lambda $ have a good mixing property, where the number of edges between
any two vertex subsets is approximated by the number of edges we would
expect if they were drawn at random. Such graphs are called expanders,
and the quality of such an approximation is controlled by $\lambda $, which
is also the mixing rate of simple random walks on $G$
\cite{chungspectral}.

Hypergraph expanders have recently received significant attention in combinatorics
and theoretical computer science
\cite{lubotzky2017high,dinur2017high}. Many different definitions have
been proposed for hypergraph expanders, each with their own strengths and
weaknesses. In this section, we only consider hypergraph models that have
a generalized version of expander mixing lemma~\eqref{eq:expandermixing}.

There are several hypergraph expander mixing lemmas in the literature based
on the spectral norm of tensors
\cite{friedman1995second,lenz2015eigenvalues,cohen2016inverse}. However,
for deterministic tensors, the spectral norm is NP-hard to compute
\cite{hillar2013most}, hence those estimates might not be applicable in
practice. In \cite{bilu2004codes,harris2019deterministic}, the authors
obtained a weaker expander version mixing lemma for a sparse deterministic
hypergraph model where the mixing property depends on the second eigenvalue
of a regular graph. Friedman and Widgerson \cite{friedman1995second} obtained
the following spectral norm bound for a random hypergraph model: Consider
a $k$-uniform hypergraph model on $n$ vertices where $ n^{k}p$ hyperedges
are chosen independently at random. Let $J$ be the order-$k$ tensor with
all entries taking value $1$. If $p\geq Ck\log n/n$, then with high probability
$ \left \|  T-pJ\right \|  \leq (C\log n)^{k/2}\sqrt{np}$. Combining with
their expander mixing lemma in \cite{friedman1995second}, it provides a
random hypergraph model with a good control of the mixing property. This
is a random hypergraph model with expected degrees ${n^{k-1}p}$, which
is not bounded. To the best of our knowledge, our Theorem~\ref{thm:mixing} below is the first construction of a sparse random hypergraph
model with bounded degrees that satisfies a $k$-subset expander mixing
lemma with high probability. The idea of applying expander mixing lemma
and spectral gap results of sparse expanders to analyze matrix completion
and tensor completion has been developed in
\cite{heiman2014deterministic,bhojanapalli2014universal,brito2018spectral,gamarnik2017matrix,harris2019deterministic}.
We believe our result could also be useful for tensor completion or other
related problems.

Let $H$ be a $k$-uniform Erd\H{o}s-R\'{e}nyi hypergraph (recall Definition~\ref{def:hypergraph}) on $n$ vertices with sparsity
$p=\frac{c}{n^{k-1}}$, where each hyperedge is generated independently
with probability $p$. Its adjacency tensor is then a symmetric tensor,
denoted by~$T$. We construct a regularized hypergraph $H'$ as follows:
\begin{enumerate}
\item Construct $\tilde{T}$ such that
%
\begin{align}
\label{eq:symmetricregularize}
\tilde{t}_{i_{1},\dots ,i_{k}}=
\begin{cases}
t_{i_{1},\dots ,i_{k}} & \text{if } 1\leq i_{1}<i_{2}<\dots < i_{k}
\leq n,
\\
0 & \text{otherwise.}
\end{cases}
\end{align}
\label{eq:step1}
\item Compute
$ \tilde{d}_{i}:=\sum _{i_{1},\dots , i_{k-1}\in [n]} \tilde{t}_{i,i_{1},
\dots ,i_{k-1}}$ for all $i\in [n]$. If $\tilde{d}_{i}>2n^{k-1}p$, zero
out all entries
$\tilde{t}_{i,i_{1},\dots ,i_{k-1}}, i_{1},\dots , i_{k-1} \in [n]$. We
then obtain a new tensor $\hat{T}$.
\item Define $T'$ such that
$(t')_{i_{1},i_{2},\dots ,i_{k}}=\sum _{\sigma \in \mathfrak{S}_{k}}
\hat{t}_{i_{\sigma (1)},\dots , i_{\sigma (k)}}$, where
$\mathfrak{S}_{k}$ is the symmetric group of order $k$. We then obtain a
regularized hypergraph $H'$ with adjacency tensor $T'$.%
\label{eq:step3}
\end{enumerate}

Note that this regularization procedure is applicable to inhomogeneous
random hypergraphs by taking
$ p=\max _{i_{1},\dots ,i_{k}\in [n]} p_{i_{1},\dots ,i_{k}}$. By our construction,
$H'$ is a $k$-uniform hypergraph with degrees at most
$2k!n^{k-1}p=2k!c$. Let $J\in \mathbb{R}^{n^{k}}$ be an order $k$ tensor
with all entries taking value $1$. From Theorem~\ref{thm:regularize}, for
some constant $C>0$, with high probability its adjacency tensor $T'$ satisfies
%
\begin{align}
\label{eq:expandreg}
\|T'-p J\| \leq C\sqrt{n^{k-1}p}.
\end{align}
In the next theorem we use~\eqref{eq:expandreg} to show that $H'$ satisfies
an expander mixing lemma with high probability.

\begin{definition}
If $V_{1},\dots , V_{k}$ are subsets of $V(H)$ for a $k$-uniform hypergraph
$H$, define
%
\begin{align}
e_{H}(V_{1},\dots V_{k}):=|\{(v_{1},\dots , v_{k})\in V_{1}\times
\cdots \times V_{k}: \{v_{1},\dots , v_{k}\}\in E(H) \}|
\end{align}
to be the number of hyperedges between $V_{1},\dots , V_{k}$.
\end{definition}

\begin{theorem}%
\label{thm:mixing}
Let $H$ be a $k$-uniform Erd\H{o}s-R\'{e}nyi hypergraph on $n$ vertices
with sparsity $p= \frac{c}{n^{k-1}}$ for some sufficiently large constant
$c>1$. Let $H'$ be the hypergraph $H$ after regularization, then there
exists a constant $C>0$ depending on $k$ such that with high probability
for any non-empty subsets $V_{1},\dots , V_{k}\subset [n]$, we have the
following expander mixing inequality:
%
\begin{align}
\label{eq:mixinglemma}
\left | e_{H'}(V_{1},\dots , V_{k})-p|V_{1}|\cdots |V_{k}|\right |
\leq C\sqrt{c} \cdot \sqrt{|V_{1}|\cdots |V_{k}|}.
\end{align}
\end{theorem}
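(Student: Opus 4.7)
The plan is to reduce the inequality \eqref{eq:mixinglemma} to the spectral norm bound \eqref{eq:expandreg} by expressing both sides as the multilinear form of a tensor evaluated on indicator vectors. Concretely, for each subset $V_i\subset [n]$ write $\mathbf{1}_{V_i}\in \{0,1\}^n$ for its indicator. I would first verify that
\begin{equation*}
e_{H'}(V_1,\dots,V_k) \;=\; T'(\mathbf{1}_{V_1},\dots,\mathbf{1}_{V_k}) \;=\; \langle T',\, \mathbf{1}_{V_1}\otimes\cdots\otimes \mathbf{1}_{V_k}\rangle,
\end{equation*}
and
\begin{equation*}
p\,J(\mathbf{1}_{V_1},\dots,\mathbf{1}_{V_k}) \;=\; p\,|V_1|\cdots |V_k|.
\end{equation*}

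For the first identity I would unpack the symmetrization in step \eqref{eq:step3}: the tensor $\tilde{T}$ is supported only on strictly increasing tuples $i_1<\dots<i_k$, so for any ordered tuple $(v_1,\dots,v_k)$ with distinct entries exactly one permutation $\sigma\in\mathfrak{S}_k$ sends $(v_1,\dots,v_k)$ to its sorted version, while for tuples with a repeated coordinate no permutation yields a strictly increasing tuple. Consequently $t'_{v_1,\dots,v_k}\in\{0,1\}$, equals $1$ exactly when $\{v_1,\dots,v_k\}\in E(H')$, and vanishes on tuples with repeated coordinates (which cannot support a $k$-element hyperedge). Summing over $(v_1,\dots,v_k)\in V_1\times\cdots\times V_k$ recovers $e_{H'}(V_1,\dots,V_k)$. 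The second identity is immediate from the definition of $J$.

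Subtracting the two identities and applying the definition of the spectral norm to the unit vectors $\mathbf{1}_{V_i}/\|\mathbf{1}_{V_i}\|_2$ gives
\begin{equation*}
\bigl|e_{H'}(V_1,\dots,V_k)-p|V_1|\cdots|V_k|\bigr| \;=\; \bigl|(T'-pJ)(\mathbf{1}_{V_1},\dots,\mathbf{1}_{V_k})\bigr| \;\le\; \|T'-pJ\|\cdot \prod_{i=1}^k \|\mathbf{1}_{V_i}\|_2.
\end{equation*}
Since $\|\mathbf{1}_{V_i}\|_2=\sqrt{|V_i|}$, and since \eqref{eq:expandreg} combined with $p=c/n^{k-1}$ gives $\|T'-pJ\|\le C\sqrt{n^{k-1}p}=C\sqrt{c}$ with high probability, the right-hand side is bounded by $C\sqrt{c}\sqrt{|V_1|\cdots|V_k|}$, which is exactly \eqref{eq:mixinglemma}.

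The main obstacle is really conceptual rather than technical: one has to check carefully that the symmetrization in steps \eqref{eq:step1}--\eqref{eq:step3} produces a $\{0,1\}$-valued symmetric tensor whose multilinear form on indicators counts ordered $k$-tuples of edges in $H'$ without any combinatorial overcounting or undercounting from repeated indices. Once this bookkeeping is in place, the inequality is a one-line consequence of the spectral norm bound \eqref{eq:expandreg} inherited from Theorem~\ref{thm:regularize}, and the high-probability event in \eqref{eq:mixinglemma} is simply the event on which \eqref{eq:expandreg} holds.
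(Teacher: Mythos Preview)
Your proposal is correct and follows essentially the same route as the paper: express $e_{H'}(V_1,\dots,V_k)$ and $p|V_1|\cdots|V_k|$ as $T'(\mathbf 1_{V_1},\dots,\mathbf 1_{V_k})$ and $pJ(\mathbf 1_{V_1},\dots,\mathbf 1_{V_k})$, normalize the indicators to unit vectors, and bound by $\|T'-pJ\|\le C\sqrt{n^{k-1}p}=C\sqrt{c}$ via~\eqref{eq:expandreg}. Your additional verification that the symmetrization in steps~\eqref{eq:step1}--\eqref{eq:step3} yields a $\{0,1\}$-valued adjacency tensor (so that the multilinear form on indicators really equals $e_{H'}$) is a detail the paper leaves implicit, and your argument for it is correct.
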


\subsection{Tensor sparsification}
\label{sec:sparsification}

In the \textit{tensor completion} problem, one aims to estimate a low-rank
tensor based on a random sample of observed entries. A commonly used
definition of the rank for tensors is called canonical polyadic (CP) rank. We
refer to \cite{kolda2009tensor} for more details. In order to solve a tensor
completion problems, there are two main steps. First, one needs to sample
some entries from a low-rank tensor $T$. Then, based on the observed entries,
one solves an optimization problem and justifies that the solutions to this
problem will be exactly or nearly the original tensor~$T$. A fundamental
question is: how many observations are required to guarantee that the
solution of the optimization problem provides a good recovery of the original
tensor $T$?

After a random sampling from the original tensor $T$, we obtain a random
tensor $\tilde{T}$. If we require the sample size to be small,
$\tilde{T}$ then will be random and sparse. In the next step, the optimization
procedure is then based on $\tilde{T}$. In the matrix or tensor completion
algorithm, especially for the non-convex optimization algorithm, we need
some stability guarantee on the initial data, see for example
\cite{keshavan2010matrix,jain2014provable,cai2019}. Therefore, it is important
to have concentration guarantee such that $\tilde{T}$ is close to
$T$ under the spectral norm.

Another related problem is called \textit{tensor sparsification}. Given
a tensor $T$, through some sampling algorithm, one wants to construct a
sparsified version $\tilde{T}$ of $T$ such that $\|T-\tilde{T}\|$ is relatively
small with high probability. In \cite{nguyen2015tensor}, a non-uniform
sampling algorithm was proposed and the probability of sampling each entry
is chosen based on the magnitude of the entry in $T$. However, without
knowing the exact value of the original tensor $T$, a reasonable way to
output a sparsified tensor $\tilde{T}$ is through uniform sampling.

We obtain a concentration inequality of the spectral norm for tensors under
uniform sampling, which is useful to both of the problems above. It improves
the sparsity assumption in the analysis of the initialization step for
the tensor completion algorithm proposed in \cite{jain2014provable}. Let
$T$ be a deterministic tensor. We obtain a new tensor $\tilde{T}$ by uniformly
sampling entries in $T$ with probability $p$. Namely,
\begin{align*}
\tilde{t}_{i_{1},\dots ,i_{k}}=
\begin{cases}
t_{i_{1},\dots ,i_{k}} & \text{with probability } p,
\\
0 & \text{with probability } 1-p.
\end{cases}
\end{align*}
By our definition, $\mathbb{E}\tilde{T}=pT$. The following is an estimate
about the concentration of $\tilde{T}$ under the spectral norm when
$p\geq \frac{c\log n}{n^{m}}$, $1\leq m\leq k-1$. It is a quick corollary
from Theorem~\ref{thm:lei} and Theorem~\ref{thm:main}.
%
\begin{corollary}%
\label{thm:sampling}
Let $p\geq \frac{c\log n}{n^{m}}$ for some constant $c>0$ and some integer
$1\leq m\leq k-1$. Denote
$t_{\max }:=\max _{i_{1},\dots ,i_{k}\in [n]}|t_{i_{1},\dots ,i_{k}}|$.
For any $r>0$, there exists a constant $C>0$ depending on $r,k,c$ such
that with probability $1-n^{-r}$,
\begin{align*}
\|\tilde{T}-pT\| \leq
\begin{cases}
Ct_{\max }\sqrt{n^{m}p} & \text{if} \quad k/2\leq m\leq k-1,
\\
Ct_{\max } \sqrt{n^{m}p} \log ^{\frac{k-1}{m}-1}(n) & \text{if} \quad 1
\leq m<k/2, \quad k/m\not \in \mathbb{Z},
\\
Ct_{\max }\sqrt{n^{m}p} \log ^{\frac{k}{m}-2}(n) & \text{if} \quad 1
\leq m <k/2,\quad k/m \in \mathbb{Z}.
\end{cases}
\end{align*}
\end{corollary}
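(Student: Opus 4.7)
The plan is to reduce the bound for the uniformly sampled tensor $\tilde{T}$ to those already established in Theorems~\ref{thm:lei} and~\ref{thm:main} by realizing $\tilde{T}-pT$ as a Hadamard product of a deterministic tensor with a centered Bernoulli tensor. Let $T' \in \{0,1\}^{n^{k}}$ be a random tensor with independent $\text{Bernoulli}(p)$ entries $b_{i_{1},\dots,i_{k}}$, and let $A:=T$ be viewed as a deterministic tensor. Then $\tilde{t}_{i_{1},\dots,i_{k}} = a_{i_{1},\dots,i_{k}}\, b_{i_{1},\dots,i_{k}}$, so
\[
\tilde{T} - p T \;=\; A \circ T' - \mathbb{E}[A \circ T'].
\]
In particular the entries of $\tilde{T}-pT$ are independent, mean-zero, bounded by $t_{\max}$ in absolute value, and of variance at most $t_{\max}^{2}p$.

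In the densest regime $p \ge c\log n/n$, i.e.\ $m=1$, I would apply Theorem~\ref{thm:lei} directly to this decomposition, using $\max_{i_{1},\dots,i_{k}}|A_{i_{1},\dots,i_{k}}| = t_{\max}$. This produces
\[
\|\tilde{T}-pT\| \le C\, t_{\max}\, \sqrt{np}\,\log^{k-2}(n)
\]
with the required probability, which matches the third case of the corollary when $k \ge 3$; the case $k=2$ then falls into the first case with $m=1$ since $\log^{0}(n) = 1$.

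For $m \ge 2$, Theorem~\ref{thm:main} is itself stated for purely Bernoulli tensors, so rather than invoking it as a black box I would rerun its proof on $\tilde{T}-pT$. That proof proceeds by unfolding the order-$k$ tensor into an $n^{k-m}\times n^{m}$ matrix $M$ (or iterating through several unfoldings in the range $1\le m < k/2$), using $\|T-\mathbb{E}T\| \le \|M - \mathbb{E}M\|$, and then invoking a matrix concentration inequality on $M-\mathbb{E}M$. The inputs to those concentration bounds are only the independence of the matrix entries together with an $L^{\infty}$ and a variance bound. Applied to $\tilde{T}-pT$, the corresponding unfolded matrices have entries $t_{e}(b_{e}-p)$: still independent, now bounded by $t_{\max}$ in place of $1$, and with variance at most $t_{\max}^{2}p$ in place of $p$. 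Running the identical argument therefore scales the final bound by a factor of $t_{\max}$, reproducing each of the three cases in Corollary~\ref{thm:sampling} with exactly the same polylogarithmic exponents as in Theorem~\ref{thm:main}.

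The main obstacle I anticipate is purely bookkeeping: verifying that every intermediate estimate in the proof of Theorem~\ref{thm:main} (including the reduction across multiple unfoldings when $1\le m < k/2$, which is the source of the $\log^{(k-1)/m-1}(n)$ versus $\log^{k/m-2}(n)$ dichotomy) depends on the entry distribution only through its $L^{\infty}$ norm and variance, so that replacing $\{0,1\}$-valued entries by $\{0,t_{e}\}$-valued entries costs at most a single factor of $t_{\max}$. No new concentration tool is required beyond those already used in Theorems~\ref{thm:lei} and~\ref{thm:main}, which is why the result is presented as a corollary.
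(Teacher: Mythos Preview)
Your approach is correct and matches the paper's, which presents the result as an immediate corollary of Theorems~\ref{thm:lei} and~\ref{thm:main} without a separate proof. The bookkeeping concern you raise is lighter than you fear: since Theorem~\ref{thm:lei} is already stated for a Hadamard product $A\circ T'$ and the proof of Theorem~\ref{thm:main} proceeds by unfolding and then invoking Theorem~\ref{thm:lei}, it suffices to observe that unfolding commutes with Hadamard products, so the unfolded $\tilde{T}-pT$ is again of the form $(\text{unfolded }T)\circ(\text{unfolded }T'-\mathbb{E}T')$, to which Theorem~\ref{thm:lei} applies directly with $\max|A|=t_{\max}$.
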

%
\begin{remark}
Theorem 2.1 in \cite{jain2014provable} provides an estimate of
$\|\tilde{T}-pT\|$ for a symmetric tensor $T$ with symmetric sampling,
assuming $k=3$ and $p\geq \frac{\log n}{n^{3/2}}$. When $k=3$, our result
allows the sparsity down to $p\geq \frac{c\log n}{n^{2}}$ and covers non-symmetric
tensors with uniform sampling.
\end{remark}

\section{Proof of Theorem \ref{thm:lei}}
\label{sec:prooflei}

The proof is a generalization of
\cite{feige2005spectral,lei2015consistency} and is suitable for sparse
random tensors. This type of method is known as the Kahn-Szemer\'{e}di
argument originally introduced in \cite{friedman1989}. Without loss of
generality, we may assume
%
\begin{align}
\label{eq:max1}
\max _{i_{1},\dots ,i_{k}}|A_{i_{1},\dots ,i_{k}}|=1
\end{align}
in our proof for simplicity.

\subsection{Discretization
}
\label{sec:discretization}

Fix $\delta \in (0,1)$, define the $n$-dimensional ball of radius
$t$ as
\begin{equation*}
S_{t}:=\{v\in \mathbb{R}^{n}: \|v\|_{2}\leq t \}.
\end{equation*}
We introduce a set of lattice points in $ S_{1}$ as follows:
%
\begin{align}
\label{eq:lattice}
\mathcal{T}=\left \{  {x}=(x_{1},\dots x_{n})\in S_{1}:
\frac{\sqrt{n}x_{i}}{\delta } \in \mathbb{Z},\forall i\in [n]\right \}  .
\end{align}

By the Lipschitz property of spectral norms, we have the following upper
bound, which reduces the problem of bounding the spectral norm of
$T$ to an optimization problem over $\mathcal{T}$.
%
\begin{lemma}%
\label{lem:convex}
For any tensor $T\in \mathbb{R}^{n^{k}}$ and any fixed
$\delta \in (0,1)$, we have
\begin{align*}
\|T\|\leq (1-\delta )^{-k}\sup _{y_{1},\dots , y_{k}\in \mathcal{T}} |T(y_{1},
\dots , y_{k})|.
\end{align*}
\end{lemma}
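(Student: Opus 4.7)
The plan is to run a standard discretization / $\delta$-net argument, but slot by slot in the multilinear form, so that the approximation error compounds multiplicatively through the $k$ arguments rather than additively.

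First I would establish the key approximation property of $\mathcal T$: for every unit vector $x\in\mathbb R^n$ there exists $y\in\mathcal T$ with $\|x-y\|_2\le \delta$. The natural choice is to round each coordinate $x_i$ towards $0$ to the nearest multiple of $\delta/\sqrt n$. This ensures $|y_i|\le |x_i|$, so $\|y\|_2\le\|x\|_2\le 1$ and $y$ lies in $\mathcal T$, while $|x_i-y_i|\le \delta/\sqrt n$ gives $\|x-y\|_2\le\delta$. (Rounding to the \emph{nearest} multiple would only give $\delta/2$, but could push $y$ outside $S_1$; rounding towards zero sidesteps that issue.)

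Next I would iterate this approximation one slot at a time. Define, for $0\le j\le k$,
\begin{equation*}
C_j(y_1,\dots ,y_j)\;:=\;\sup _{\|x_{j+1}\|_2=\cdots =\|x_k\|_2=1}\bigl|T(y_1,\dots ,y_j,x_{j+1},\dots ,x_k)\bigr|,
\end{equation*}
so that $C_0=\|T\|$ and $\sup _{y_1,\dots ,y_k\in\mathcal T}C_k=\sup _{y_1,\dots ,y_k\in\mathcal T}|T(y_1,\dots ,y_k)|$. For fixed $y_1,\dots ,y_j\in\mathcal T$, each unit vector $x_{j+1}$ can be written as $x_{j+1}=y_{j+1}+(x_{j+1}-y_{j+1})$ with $y_{j+1}\in\mathcal T$ and $\|x_{j+1}-y_{j+1}\|_2\le\delta$. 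Multilinearity of $T$ plus the triangle inequality yields
\begin{equation*}
|T(y_1,\dots ,y_j,x_{j+1},\dots ,x_k)|\le |T(y_1,\dots ,y_j,y_{j+1},x_{j+2},\dots ,x_k)|+\delta\, C_j(y_1,\dots ,y_j),
\end{equation*}
since $(x_{j+1}-y_{j+1})/\|x_{j+1}-y_{j+1}\|_2$ is a unit vector and the remaining $x$-arguments are unit vectors, so the second tensor-evaluation is bounded by $C_j(y_1,\dots ,y_j)$. Taking the supremum over $x_{j+1},\dots ,x_k$ on both sides (choosing $y_{j+1}$ appropriately for each $x_{j+1}$) gives
\begin{equation*}
C_j(y_1,\dots ,y_j)\le \sup _{y_{j+1}\in\mathcal T}C_{j+1}(y_1,\dots ,y_{j+1})+\delta\, C_j(y_1,\dots ,y_j),
\end{equation*}
i.e., $C_j\le (1-\delta)^{-1}\sup _{y_{j+1}\in\mathcal T}C_{j+1}$. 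Taking the supremum over $y_1,\dots ,y_j\in\mathcal T$ preserves this inequality.

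Finally I would iterate the recursion $k$ times starting from $C_0=\|T\|$, obtaining
\begin{equation*}
\|T\|\le (1-\delta)^{-k}\sup _{y_1,\dots ,y_k\in\mathcal T}|T(y_1,\dots ,y_k)|,
\end{equation*}
which is exactly the statement. There is no real obstacle here; the only care needed is the choice of rounding so that $\mathcal T\subseteq S_1$, and bookkeeping the multilinear expansion so that the approximation error enters multiplicatively (yielding $(1-\delta)^{-k}$) rather than additively (which would give the weaker $(1-k\delta)^{-1}$).
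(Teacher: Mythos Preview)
Your argument is correct. It is, however, a genuinely different route from the paper's own proof. The paper does not use a $\delta$-net approximation at all; instead it observes that the scaled ball $S_{1-\delta}$ lies in the convex hull of $\mathcal T$ (any point of $S_{1-\delta}$ sits in a lattice cube of diameter $\delta$, hence contained in $S_1$), writes each $(1-\delta)u_i$ as a convex combination $\sum_j a_j^{(i)} x_{i_j}$ of points of $\mathcal T$, and then expands $T((1-\delta)u_1,\dots,(1-\delta)u_k)$ multilinearly into a convex combination of values $T(x_{1_{j_1}},\dots,x_{k_{j_k}})$, each bounded by the supremum. The factor $(1-\delta)^{-k}$ comes simply from pulling the scalars $(1-\delta)$ out of the $k$ slots. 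Your approach instead picks a single nearest lattice point per unit vector and controls the residual via a slot-by-slot recursion $C_j\le (1-\delta)^{-1}\sup_{y_{j+1}}C_{j+1}$. Both yield the identical constant $(1-\delta)^{-k}$; the convex-hull proof is a one-shot expansion with no iteration, while your net argument is closer to the standard $\varepsilon$-net template and makes the multiplicative compounding more explicit. Either is perfectly adequate here.
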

\begin{proof}
The proof follows from Lemma 2.1 in the supplement of
\cite{lei2015consistency}. For completeness, we provide the proof here.
For any $v\in S_{1-\delta }$, consider the cube in $\mathbb{R}^{n}$ of edge
length $\delta /\sqrt{n}$ that contains $v$, with all its vertices in
$\left (\frac{\delta }{\sqrt{n}}\mathbb{Z}\right )^{n}$. The diameter of the
cube is $\delta $, so the entire cube is contained in $S_{1}$. Hence all
vertices of this cube are in $\mathcal{T}$ and
$S_{1-\delta }\subset \textnormal{convhull}(\mathcal{T})$. Therefore for
each $u_{i}\in S_{1}, 1\leq i\leq k$, we can find some sequence
$\{x_{i_{j}}\}_{j=1}^{N_{i}}\subset \mathcal{T}$ such that
$(1-\delta )u_{i}$ is a linear combination of those $\{x_{i_{j}}\}$, namely,
\begin{equation*}
(1-\delta )u_{i}=\sum _{j=1}^{N_{i}} a_{j}^{(i)} {x_{i_{j}}},
\end{equation*}
for some $a_{j}^{(i)}\in [0,1]$ satisfying
$ \sum _{j=1}^{N_{i}} a_{j}^{(i)}=1$. Then
\begin{align*}
|T(u_{1},\dots ,u_{k})| &=(1-\delta )^{-k} |T((1-\delta )u_{1},\dots ,(1-
\delta )u_{k})|
\\
&\leq (1-\delta )^{-k}\sum _{j_{1}=1}^{N_{1}} \dots \sum _{j_{k}=1 }^{N_{k}}
a_{j_{1}}^{(1)}\cdots a_{j_{k}}^{(k)}|T( {x_{1_{j_{1}}}},\dots , {x_{k_{j_{k}}}})|
\\
&\leq (1-\delta )^{-k}\sup _{y_{1},\dots , y_{k}\in \mathcal{T}}|T(y_{1},
\dots ,y_{k})|,
\end{align*}
where the last inequality is due to
\begin{align*}
\sum _{j_{1}=1}^{N_{1}} \dots \sum _{j_{k}=1}^{N_{k}}a_{j_{1}}^{(1)}
\cdots a_{j_{k}}^{(k)} = \prod _{i=1}^{k} \left (\sum _{j_{i}=1}^{N_{i}}
a_{j_{i}}^{(i)}\right )=1.
\end{align*}
This completes the proof.
\end{proof}

Now for any fixed $k$-tuples
$(y_{1},\dots , y_{k})\in \mathcal{T}\times \cdots \times \mathcal{T}$, we
decompose its index set. Define the set of \textit{light tuples} as
%
\begin{align}
\label{eq:deflight}
\mathcal{L}=\mathcal{L}(y_{1},\dots , y_{k}):=\left \{  (i_{1},\dots ,i_{k})
\in [n]^{k}: |y_{1, i_{1}}\cdots y_{k, i_{k}}|\leq
\frac{\sqrt{np}}{n}\right \}  ,
\end{align}
and \textit{heavy tuples} as
%
\begin{align}
\label{eq:defheavy}
\overline{\mathcal{L}}=\overline{\mathcal{L}}(y_{1},\dots , y_{k}):=
\left \{  (i_{1},\dots ,i_{k})\in [n]^{k}: |y_{1,i_{1}}\cdots y_{k,i_{k}}|
> \frac{\sqrt{np}}{n}\right \}  .
\end{align}

In the remaining part of our proof, we control the contributions of light
and heavy tuples to the spectral norm respectively.

\subsection{Light tuples}
\label{sec4.2}

Let $W:=A\circ T-\mathbb{E}[A\circ T]$ be the centered random tensor and
we denote the entries of $W$ by $w_{i_{1},\dots ,i_{k}}$ for
$i_{1},\dots , i_{k}\in [n]$. We have the following concentration bound
for the contribution of light tuples.

\begin{lemma}%
\label{lem:light}
For any constant $c>0$,
\begin{align*}
&\mathbb{P}\left (\sup _{y_{1},\dots ,y_{k}\in \mathcal{T}}\left |\sum _{(i_{1},
\dots , i_{k})\in \mathcal{L} } y_{1,i_{1}}\cdots y_{k,i_{k}}w_{i_{1},
\dots , i_{k}} \right |\geq c\sqrt{np} \right )
\\
&\leq 2\exp \left [ -n\left (\frac{c^{2}}{2(1+c/3)}-k\log \Big (
\frac{7}{\delta }\Big )\right ) \right ],
\end{align*}
where $\delta \in (0,1)$ on the right hand side is determined by the definition
of $\mathcal{T}$ in~\eqref{eq:lattice}.
\end{lemma}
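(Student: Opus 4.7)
The plan is a standard Kahn--Szemer\'edi/$\varepsilon$-net strategy: for each fixed $k$-tuple $(y_1,\dots,y_k)\in\mathcal{T}^k$, apply Bernstein's inequality to the sum restricted to $\mathcal{L}(y_1,\dots,y_k)$, and then take a union bound over $\mathcal{T}^k$. The light-tuple definition is exactly what makes Bernstein quantitatively sharp, because it directly provides the uniform per-term bound needed for the linear-in-deviation denominator.

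Fix $(y_1,\dots,y_k)\in\mathcal{T}^k$ and set $X_{i_1,\dots,i_k}:=y_{1,i_1}\cdots y_{k,i_k}\,w_{i_1,\dots,i_k}$ for $(i_1,\dots,i_k)\in\mathcal{L}$. These random variables are independent and centered. From the normalization \eqref{eq:max1} and $t_{i_1,\dots,i_k}\in\{0,1\}$ we have $|w_{i_1,\dots,i_k}|\le 1$, so the defining inequality of $\mathcal{L}$ in \eqref{eq:deflight} gives the uniform bound
\[
|X_{i_1,\dots,i_k}|\le \frac{\sqrt{np}}{n}=:M.
\]
For the variance, using independence and $\mathrm{Var}(w_{i_1,\dots,i_k})\le p_{i_1,\dots,i_k}\le p$ together with the product structure of the test tensor,
\[
\sigma^2:=\sum_{(i_1,\dots,i_k)\in\mathcal{L}}\mathrm{Var}(X_{i_1,\dots,i_k})\le p\sum_{i_1,\dots,i_k\in[n]}\prod_{j=1}^{k}y_{j,i_j}^{2}=p\prod_{j=1}^{k}\|y_j\|_2^{2}\le p,
\]
since each $y_j\in\mathcal{T}\subset S_1$. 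Bernstein's inequality with deviation $t=c\sqrt{np}$ then yields
\[
\mathbb{P}\!\left(\Big|\sum_{\mathcal{L}}X_{i_1,\dots,i_k}\Big|\ge c\sqrt{np}\right)\le 2\exp\!\left(-\frac{t^2/2}{\sigma^2+Mt/3}\right)\le 2\exp\!\left(-\frac{nc^2}{2(1+c/3)}\right),
\]
after cancelling a factor of $p$ in the denominator.

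It remains to pay the union-bound cost over $\mathcal{T}^k$. A standard volume argument for the rescaled integer grid in~\eqref{eq:lattice} gives $|\mathcal{T}|\le (7/\delta)^n$ (compare the grid box of side $\delta/\sqrt{n}$ with a slightly enlarged ball around $S_1$), so $|\mathcal{T}|^k\le (7/\delta)^{nk}$. Combining this with the Bernstein bound above produces exactly the stated inequality
\[
2\exp\!\Big[-n\Big(\tfrac{c^2}{2(1+c/3)}-k\log(7/\delta)\Big)\Big].
\]
The computation is essentially mechanical; the only real subtlety is verifying that the light cutoff $\sqrt{np}/n$ is precisely calibrated so that $Mt=cp$ matches the variance scale, and checking that the product structure $\sum\prod y_{j,i_j}^2=\prod\|y_j\|_2^2$ (rather than something looking like $\|y_1\otimes\cdots\otimes y_k\|_F^2$ with unwanted cross terms) correctly absorbs the $k$ factors of $y_j$. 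After that, the union bound and choice of $\delta$ can be deferred to the outer argument that combines Lemma~\ref{lem:light} with the heavy-tuple contribution.
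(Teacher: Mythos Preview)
Your proof is correct and follows essentially the same route as the paper: Bernstein's inequality for each fixed $(y_1,\dots,y_k)\in\mathcal{T}^k$ with the light-tuple cutoff providing the uniform bound $M=\sqrt{np}/n$ and the product structure giving $\sigma^2\le p$, followed by the union bound $|\mathcal{T}|^k\le (7/\delta)^{nk}$. The only cosmetic difference is that the paper packages the indicator into a coefficient $u_{i_1,\dots,i_k}$, but the computation is identical.
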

\begin{proof}
Denote
%
\begin{align}
\label{eq:u}
u_{i_{1},\dots , i_{k}}:=y_{1,i_{1}}\cdots y_{k,i_{k}}\mathbf{1}\{|y_{1,i_{1}}
\cdots y_{k,i_{k}}|\leq \sqrt{np}/n\}.
\end{align}
Then the contribution from light tuples can be written as
\begin{equation*}
\sum _{ i_{1},\dots ,i_{k}\in [n]} w_{i_{1},\dots , i_{k}} u_{i_{1},
\dots , i_{k}}.
\end{equation*}
Since from~\eqref{eq:max1}, each term in the sum has mean zero and is bounded
by $\sqrt{np}/n$, we are ready to apply Bernstein's inequality to get for
any constant $c>0$,
%
\begin{align}
&\mathbb{P}\left ( \left |\sum _{ i_{1},\dots ,i_{k}\in [n]} w_{i_{1},
\dots , i_{k}} u_{i_{1},\dots , i_{k}}\right |\geq c\sqrt{np}\right )
\\
\leq & 2 \exp \left ( -
\frac{c^{2} np/2}{\sum _{i_{1}, \dots ,i_{k}\in [n]} p_{i_{1},\dots , i_{k}}(1-p_{i_{1},\dots ,i_{k}})u_{i_{1},\dots , i_{k}}^{2}+\frac{1}{3}\frac{\sqrt{np}}{n} c\sqrt{np}}
\right )
\notag
\\
\leq & 2 \exp \left ( -
\frac{c^{2} np/2}{p\sum _{i_{1}, \dots ,i_{k}\in [n]} u_{i_{1},\dots , i_{k}}^{2}+ \frac{cp}{3} }
\right ).
\label{eq:tail}
\end{align}
From~\eqref{eq:u} we have
\begin{align*}
\sum _{i_{1}, \dots ,i_{k}\in [n]} u_{i_{1},\dots , i_{k}}^{2} &\leq
\sum _{i_{1},\dots , i_{k}\in [n]}y_{1,i_{1}}^{2}\cdots y_{k,i_{k}}^{2}
= \prod _{j=1}^{k} \|y_{j}\|_{2}^{2}=1.
\end{align*}
Then~\eqref{eq:tail} is bounded by
$ 2\exp \left ( \frac{-c^{2}n}{2+\frac{2c}{3}}\right ) $. By the volume
argument (see for example \cite{vershynin2018high}) we have
$|\mathcal{T}|\leq \exp (n\log (7/\delta ))$, hence the $k$-th product of
$\mathcal{T}$ satisfies
\begin{equation*}
|\mathcal{T}\times \cdots \mathcal{\times }\mathcal{T}|\leq \exp (kn
\log (7/\delta )).
\end{equation*}
Then taking a union bound over all possible
$y_{1},\dots , y_{k}\in \mathcal{T}$, we have
\begin{align*}
\sup _{y_{1},\dots , y_{k}\in \mathcal{T}}\left |\sum _{(i_{1},\dots ,i_{k})
\in \mathcal{L} } y_{1,i_{1}}\cdots y_{k,i_{k}}w_{i_{1},\dots , i_{k}}
\right |\leq c\sqrt{np}
\end{align*}
with probability at least
$ 1-2\exp \left [ -\frac{c^{2}n}{2(1+c/3)}+kn\log (7/\delta )\right ]
$. This completes the proof.
\end{proof}

By Lemma~\ref{lem:light}, for any $r>0$, we can take the constant
$c$ in Lemma~\ref{lem:light} large enough depending on $k$ and $r$ (for
example, taking $c=6r+6k\log (7/\delta )$ suffices) such that with probability
at least $1-n^{-r}$,
\begin{equation*}
\sup _{y_{1},\dots ,y_{k}\in \mathcal{T}}\left |\sum _{i_{1},\dots , i_{k}
\in \mathcal{L} } y_{1,i_{1}}\cdots y_{k,i_{k}}w_{i_{1},\dots ,i_{k}}
\right |\leq c\sqrt{np}.
\end{equation*}
Therefore to prove Theorem~\ref{thm:main}, it remains to control the contribution
from heavy tuples.

\subsection{Heavy tuples}
\label{sec:heavy}

Next, we show the contribution from heavy tuples is bounded by
$c\sqrt{np}\log ^{k-2}(n)$ for some constant $c>0$ depending on $k$ with
high enough probability. Namely,
\begin{equation*}
\sup _{y_{1},\dots , y_{k}\in \mathcal{T}}\left | \sum _{(i_{1},\dots ,i_{k})
\in \overline{\mathcal{L}}} y_{1,i_{1}}\cdots y_{k,i_{k}}\cdot w_{i_{1},
\dots ,i_{k}}\right |\leq c\sqrt{np}\log ^{k-2}(n).
\end{equation*}
Note that from our definition of heavy tuples in~\eqref{eq:defheavy}, we
have
%
\begin{align}
&\left | \sum _{(i_{1},\dots ,i_{k})\in \overline{\mathcal{L}} } y_{1,i_{1}}
\cdots y_{k,i_{k}}\cdot (a_{i_{1},\dots ,i_{t}}p_{i_{1},\dots ,i_{k}})
\right |
\notag
\\
\leq & \sum _{(i_{1},\dots ,i_{k})\in \overline{\mathcal{L}}}
\frac{y_{1,i_{1}}^{2}\cdots y_{k,i_{k}}^{2}}{|y_{1,i_{1}}\cdots y_{k,i_{k}}|}
\cdot p_{i_{1},\dots ,i_{k}} \leq \sum _{(i_{1},\dots ,i_{k})\in
\overline{\mathcal{L}}} \frac{n}{\sqrt{np}}{y_{1,i_{1}}^{2}\cdots y_{k,i_{k}}^{2}}
\cdot p
\notag
\\
\leq & \sqrt{np}\sum _{(i_{1},\dots ,i_{k})\in \overline{\mathcal{L}}} {y_{1,i_{1}}^{2}
\cdots y_{k,i_{k}}^{2}} \leq \sqrt{np}.
\label{eq:heavyboundforp}
\end{align}
Then it suffices to show that with high enough probability for all
$y_{1},\dots ,y_{k}\in \mathcal{T}$,
%
\begin{align}
\label{eq:heavy}
\left | \sum _{(i_{1},\dots ,i_{k})\in \overline{\mathcal{L}}} y_{1,i_{1}}
\cdots y_{k,i_{k}}\cdot (a_{i_{1},\dots ,i_{k}}t_{i_{1},\dots ,i_{k}})
\right |\leq C_{k}\sqrt{np}\log ^{k-2}(n)
\end{align}
for a constant $C_{k}$ depending on $k$. We will focus on the heavy tuples
$(i_{1},\dots , i_{k})$ such that
$y_{1,i_{1}},\dots , y_{k,i_{k}}>0$. We denote this set by
$\overline{\mathcal{L}}^{+}$. The remaining cases will be similar and there
are $2^{k}$ different cases in total. Note that
%
\begin{align}
\label{eq:reduction}
&\left | \sum _{(i_{1},\dots ,i_{k})\in \overline{\mathcal{L}}^{+}} y_{1,i_{1}}
\cdots y_{k,i_{k}}\cdot {(a_{i_{1},\dots ,i_{k}}t_{i_{1},\dots ,i_{k}})}
\right |
\notag
\\
\leq & \sum _{(i_{1},\dots ,i_{k})\in \overline{\mathcal{L}}^{+}} y_{1,i_{1}}
\cdots y_{k,i_{k}}\cdot {t_{i_{1},\dots ,i_{k}}}.
\end{align}
For the rest of the proof we will bound the right hand side of~\eqref{eq:reduction}. We now define the following index sets for a fixed
tuple
$(y_{1},\dots , y_{k})\in \mathcal{T}\times \cdots \times \mathcal{T}$:
%
\begin{align}
\label{eq:rangeS}
D_{j}^{s}=\left \{  i: \frac{2^{s-1}\delta }{\sqrt{n}}\leq y_{j,i}\leq
\frac{2^{s}\delta }{\sqrt{n}} \right \}  \text{ for } s=1,\dots , \left
\lceil \log _{2} (\sqrt{n}/\delta ) \right \rceil \text{ and } 1\leq j
\leq k.
\end{align}
By our definition of $\mathcal{T}$ in~\eqref{eq:lattice}, for any
$(y_{1},\dots , y_{k})\in (\mathcal{T}\times \cdots \times \mathcal{T})$
and $(i_{1},\dots ,i_{k})\in \overline{\mathcal{L}}^{+}$, we have
$y_{j,i_{j}}\geq \delta /\sqrt{n}$ for all $1\leq j\leq k$. Therefore each
$i_{j}$ is in $D_{j}^{s}$ for some $s$. Recall the definition of degree
for a $(k-1)$-tuple in~\eqref{eq:tupledegree}. Also the following definitions
are needed:
\begin{enumerate}
\item
\begin{equation*}
e(I_{1},\dots , I_{k}):=\left |\{ (i_{1},\dots , i_{k}): t_{i_{1},
\dots , i_{k}}=1, i_{1}\in I_{1},\dots , i_{k}\in I_{k}\} \right |.
\end{equation*}
\item
\begin{equation*}
\mu (I_{1},\dots , I_{k})=\mathbb{E} e(I_{1},\dots , I_{k}), \quad
\overline{\mu }(I_{1},\dots , I_{k})=p|I_{1}|\cdots |I_{k}|.
\end{equation*}
\item For
$1\leq s_{1},\dots ,s_{k}\leq \log _{2} (\sqrt{n}/\delta )$,
\begin{equation*}
\lambda _{s_{1},\dots , s_{k}}=
\frac{e(D_{1}^{s_{1}},\dots ,D_{k}^{s_{k}})}{\overline{\mu }_{s_{1},\dots , s_{k}}},
\quad \overline{\mu }_{s_{1},\dots , s_{k}}=\overline{\mu }(D_{1}^{s_{1}},
\dots ,D_{k}^{s_{k}}).
\end{equation*}
\label{def:lambdamu}
\item
$\ \alpha _{j,s}=|D_{j}^{s}|\cdot 2^{2s}/n, 1\leq j\leq k, 1\leq s
\leq \log _{2} (\sqrt{n}/\delta )$.
\label{eq:defalpha}
\item
$\sigma _{s_{1},\dots , s_{k}}=\lambda _{s_{1},\dots , s_{k}}n^{k/2-1}
\sqrt{np}\cdot 2^{-(s_{1}+\cdots +s_{k})}, 1\leq s_{1},\dots ,s_{k}
\leq \log _{2} (\sqrt{n}/\delta )$.%
\label{def:alphasigma}
\end{enumerate}

The following two lemmas are about the properties of the sparse directed
random hypergraphs (recall Definition~\ref{def:adj:tensor}), which are
important for the rest of our proof.

\begin{lemma}[Bounded degree]%
\label{lem:bounddeg}
Assume $p\geq c \log n/n$ for some constant $c>0$. Then for any
$r>0$, there exists a constant $c_{1}>1$ depending on $c,r,k$ such that
with probability at least $1-n^{-r}$, for all
$i_{1},\dots ,i_{k-1}\in [n]$,
$d_{i_{1},\dots ,i_{k-1}}\leq c_{1}np $.
\end{lemma}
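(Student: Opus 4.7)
\textbf{Proof plan for Lemma~\ref{lem:bounddeg}.} The strategy is a straightforward Chernoff bound combined with a union bound over all $(k{-}1)$-tuples of indices. For each fixed tuple $(i_1,\dots,i_{k-1})\in[n]^{k-1}$, the quantity $d_{i_1,\dots,i_{k-1}} = \sum_{i_k\in[n]} t_{i_1,\dots,i_k}$ is a sum of $n$ independent Bernoulli random variables with parameters $p_{i_1,\dots,i_{k-1},i_k}\le p$. In particular, $\mathbb{E}\, d_{i_1,\dots,i_{k-1}} \le np$, and since $np \ge c\log n \to \infty$, the degree is sharply concentrated around its mean.

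The key step is the multiplicative Chernoff inequality: for any $\lambda > 1$,
\begin{equation*}
\mathbb{P}\bigl(d_{i_1,\dots,i_{k-1}} \ge \lambda \, np\bigr) \le \exp\bigl(-(\lambda \log \lambda - \lambda + 1)\,np\bigr).
\end{equation*}
Using the hypothesis $np \ge c\log n$, this bound becomes $n^{-c(\lambda\log\lambda-\lambda+1)}$. Since $\lambda\log\lambda - \lambda + 1 \to \infty$ as $\lambda\to\infty$, I can choose $c_1 = c_1(c,r,k) > 1$ large enough so that $c_1\log c_1 - c_1 + 1 \ge (r+k-1)/c$, which gives a per-tuple failure probability of at most $n^{-(r+k-1)}$.

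Finally, I apply the union bound over all $n^{k-1}$ choices of $(i_1,\dots,i_{k-1})$, yielding
\begin{equation*}
\mathbb{P}\Bigl(\exists\, (i_1,\dots,i_{k-1}):\ d_{i_1,\dots,i_{k-1}} > c_1 np\Bigr) \le n^{k-1}\cdot n^{-(r+k-1)} = n^{-r}.
\end{equation*}
There is no real obstacle here: the argument is essentially a textbook Chernoff-plus-union-bound calculation, made possible by the assumption $p\ge c\log n/n$ which ensures $np$ is large enough to absorb the $n^{k-1}$-factor loss from the union bound. The only choice that requires a bit of care is selecting $c_1$ large enough as a function of $c, r, k$, but this is explicit from the Chernoff exponent.
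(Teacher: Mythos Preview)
Your proposal is correct and follows essentially the same approach as the paper: apply a concentration inequality to each fixed $(k{-}1)$-tuple and then take a union bound over the $n^{k-1}$ tuples. The only cosmetic difference is that the paper uses Bernstein's inequality (yielding an exponent $\frac{3(c_1-1)^2 c}{4+2c_1}$) where you use the multiplicative Chernoff bound; both give a per-tuple failure probability of the form $n^{-f(c_1)c}$ with $f(c_1)\to\infty$, so the choice of $c_1$ depending on $c,r,k$ works identically.
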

\begin{proof}
For a fixed $(k-1)$-tuple $(i_{1},\dots ,i_{k-1})$, by Bernstein's inequality,
%
\begin{align}
\mathbb{P}( d_{i_{1},\dots ,i_{k-1}}\ge c_{1}np ) &=\mathbb{P} \left (
\sum _{i_{k}\in [n]} t_{i_{1},\dots , i_{k}} \ge c_{1}n p \right )
\notag
\\
&\le \mathbb{P} \left ( \sum _{ i_{k}\in [n]} (t_{i_{1},\dots , i_{k}}-p_{i_{1},
\dots ,i_{k}}) \ge (c_{1}-1)n p \right )
\notag
\\
&\le \exp \left [ -
\frac{\frac{1}{2} (c_{1}-1)^{2}n^{2}p^{2}}{ n p + \frac{1}{3}(c_{1}-1)n p }
\right ] \leq n^{-\frac{3(c_{1}-1)^{2}c}{4+2c_{1}}},
\label{eq:boundedelemma}
\end{align}
where in the last inequality we use the assumption $p\geq c\log n/n$. Then
taking a union bound over $i_{1},\dots ,i_{k-1}\in [n]$ implies
\begin{align*}
\mathbb{P}\left ( \sup _{i_{1},\dots ,i_{k-1}\in [n]} d_{i_{1},\dots ,i_{k-1}}
\ge c_{1} np \right ) \leq n^{-\frac{3(c_{1}-1)^{2}c}{4+2c_{1}}+k-1}.
\end{align*}
Therefore for any $r,c>0$ we can take $c_{1}$ sufficiently large (depending
linearly on $k,r$) to make Lemma~\eqref{lem:bounddeg} hold.
\end{proof}

\begin{lemma}[Bounded discrepancy]%
\label{lem:bounddiscrepancy}
Assume $p\geq c \log n/n$ for some constant $c>0$. For any $r>0$, there
exist constants $c_{2},c_{3}>1$ depending on $c,r,k$ such that with probability
at least $1-2n^{-r}$, for any nonempty sets
$I_{1},\dots , I_{k}\subset [n]$ with
$1\leq |I_{1}|\leq \cdots \leq |I_{k}|$, at least one of the following
events hold:
\begin{enumerate}
\item
$ \frac{e(I_{1},\dots , I_{k})}{\overline{\mu }(I_{1},\dots , I_{k})}
\leq e c_{2} $,%
\label{case:1}
\item
$ e(I_{1},\dots ,I_{k})\log \left (
\frac{e(I_{1},\dots , I_{k})}{\overline{\mu }(I_{1},\dots , I_{k})}
\right )\leq c_{3} |I_{k}|\log \left (\frac{n}{|I_{k}|} \right ) $.%
\label{case:2}
\end{enumerate}
\end{lemma}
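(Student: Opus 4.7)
The plan is to adapt the Kahn-Szemer\'edi / Feige-Ofek discrepancy argument from the matrix case to tensors, splitting the analysis of a tuple $(I_1,\dots,I_k)$ into two complementary regimes based on $|I_k|$. The statement's two alternatives will correspond naturally to these regimes: when $|I_k|$ is a constant fraction of $n$, the bounded-degree Lemma~\ref{lem:bounddeg} pins the ratio $e/\overline{\mu}$ down to a constant, yielding case~\eqref{case:1}; when $|I_k|$ is small, a Chernoff tail bound combined with a union bound yields case~\eqref{case:2}.

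For the first regime I would simply sum the degree bound: on the event of Lemma~\ref{lem:bounddeg} (which occurs with probability at least $1 - n^{-r}$),
\[
e(I_1,\dots,I_k) \leq \sum_{(i_1,\dots,i_{k-1})\in I_1\times\cdots\times I_{k-1}} d_{i_1,\dots,i_{k-1}} \leq c_1 np \cdot |I_1|\cdots|I_{k-1}|,
\]
so $e/\overline{\mu} \leq c_1 n/|I_k|$, and case~\eqref{case:1} is immediate whenever $|I_k| \geq \alpha n$ for a suitable $\alpha = \alpha(c_1,c_2) > 0$. For the complementary regime $|I_k| < \alpha n$, I would fix $(I_1,\dots,I_k)$ and discretize over dyadic buckets $K \in [2^j, 2^{j+1})$ with $2^j \geq ec_2$. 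The standard Chernoff estimate $\mathbb{P}(e \geq K\overline{\mu}) \leq (e/K)^{K\overline{\mu}}$ gives a per-bucket probability of $\exp(-2^j \overline{\mu} \log(2^j/e))$, while the simultaneous failure of both cases forces $2^j \overline{\mu} \log(2^j) \gtrsim c_3 |I_k| \log(n/|I_k|)$. For $c_2$ large enough these combine to a per-bucket failure bound of $\exp(-\tfrac{c_3}{8} |I_k| \log(n/|I_k|))$, and summing over the $O(\log n)$ buckets barely loses anything.

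The final step is a union bound over tuples: for fixed sizes $(n_1,\dots,n_k)$ with $n_j \leq n_k$, the number of choices is at most $\prod_j \binom{n}{n_j} \leq \exp(k|I_k|\log(en/|I_k|))$ (using monotonicity of $x \mapsto x\log(en/x)$ on $(0,n]$ and $n_j \leq n_k$), and there are only $n^k$ size tuples. Taking $c_3$ of order $k + r$ will make the Chernoff exponent dominate this entropy uniformly in $|I_k| \in [1, \alpha n]$, since $\log(n/|I_k|) \geq \log(1/\alpha) > 0$ throughout. The hardest bookkeeping will be at the boundary $|I_k| \approx n$: there the right-hand side of case~\eqref{case:2} degenerates to~$0$, so the argument must rely entirely on Lemma~\ref{lem:bounddeg} taking over. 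Calibrating $c_1, c_2, c_3$ so that the two regimes overlap cleanly is the main technical obstacle; with that overlap each regime contributes failure probability at most $n^{-r}$, giving the claimed $2n^{-r}$.
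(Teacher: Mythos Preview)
Your proposal is correct and follows essentially the same two-regime structure as the paper: the bounded-degree lemma handles $|I_k|\ge n/e$ (yielding Case~\eqref{case:1}), and a Chernoff bound plus a union bound over all $(I_1,\dots,I_k)$ with entropy $\lesssim k|I_k|\log(n/|I_k|)$ handles the small-$|I_k|$ regime. The only technical difference is that the paper avoids your dyadic bucketing by defining, for each tuple, a single critical threshold $\gamma$ via $\gamma\log\gamma = c_3|I_k|\,\overline{\mu}^{-1}\log(n/|I_k|)$, applying Chernoff once at $\kappa=\max\{8,\gamma\}$, and then using monotonicity of $x\mapsto x\log x$ to read off Case~\eqref{case:2} from $e/\overline{\mu}<\gamma$; this spares the $O(\log n)$ sum over buckets but is otherwise equivalent.
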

We will use the following Chernoff's inequality (see
\cite{boucheron2013concentration}).
%
\begin{lemma}[Chernoff bound]
Let $X_{1},\dots , X_{n}$ be independent Bernoulli random variables. Let
$X=\sum _{i=1}^{n} X_{n}$ and $\mu =\mathbb{E}X$. Then for any
$\delta >0$,
%
\begin{align}
\label{eq:strongcher}
\mathbb{P} (X>(1+\delta )\mu )\leq \exp (-\mu ((1+\delta )\ln (1+
\delta )-\delta )).
\end{align}
In particular, we have a weaker version of~\eqref{eq:strongcher}: for any
$\delta >0$,
%
\begin{align}
\label{eq:weakercher}
\mathbb{P} (X>(1+\delta )\mu )\leq \exp \left (
\frac{-\delta ^{2}\mu }{2+\delta } \right ) .
\end{align}
\end{lemma}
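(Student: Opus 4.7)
The plan is to derive both bounds by the standard exponential Markov (Chernoff) argument and then reduce the second inequality to the first by a real-variable inequality.

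First, I would apply Markov's inequality to $\exp(s X)$ for $s>0$: for any $t>0$,
\[
\mathbb{P}(X>t)\le e^{-st}\,\mathbb{E} e^{sX}=e^{-st}\prod_{i=1}^{n}\mathbb{E} e^{sX_i},
\]
using independence. Since $X_i$ is Bernoulli with parameter $p_i$, I have $\mathbb{E} e^{sX_i}=1+p_i(e^s-1)\le \exp\bigl(p_i(e^s-1)\bigr)$ by the elementary inequality $1+x\le e^x$. Multiplying gives $\mathbb{E} e^{sX}\le \exp\bigl((e^s-1)\mu\bigr)$, so
\[
\mathbb{P}(X>(1+\delta)\mu)\le \exp\bigl((e^s-1)\mu - s(1+\delta)\mu\bigr)\qquad\text{for every }s>0.
\]

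Next I would optimize in $s$. Differentiating the exponent in $s$ and setting to zero gives $e^s=1+\delta$, i.e.\ $s=\ln(1+\delta)>0$ (valid since $\delta>0$). Substituting yields $(e^s-1)-s(1+\delta) = \delta-(1+\delta)\ln(1+\delta)$, which gives the first stated inequality
\[
\mathbb{P}(X>(1+\delta)\mu)\le \exp\bigl(-\mu\bigl((1+\delta)\ln(1+\delta)-\delta\bigr)\bigr).
\]

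For the weaker version it suffices to show the purely analytic inequality
\[
(1+\delta)\ln(1+\delta)-\delta\ \ge\ \frac{\delta^{2}}{2+\delta}\qquad\text{for all }\delta>0.
\]
This I would verify by setting $f(\delta):=(1+\delta)\ln(1+\delta)-\delta-\delta^{2}/(2+\delta)$, noting $f(0)=0$, and showing $f'(\delta)\ge 0$ on $(0,\infty)$ via direct differentiation (the derivative simplifies to $\ln(1+\delta)-\delta(4+\delta)/(2+\delta)^2$, whose non-negativity follows from comparing Taylor expansions; equivalently one can write it as an integral of a manifestly positive quantity). Combined with the first bound, this yields $\mathbb{P}(X>(1+\delta)\mu)\le \exp(-\delta^{2}\mu/(2+\delta))$, completing the proof. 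The only mildly delicate step is the analytic inequality, but it is a standard tightening that appears e.g.\ in Boucheron--Lugosi--Massart, so there is no real obstacle.
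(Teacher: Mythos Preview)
Your argument is correct and is exactly the standard Chernoff derivation. Note, however, that the paper does not actually prove this lemma: it simply states the bound and cites Boucheron--Lugosi--Massart, so there is no ``paper's own proof'' to compare against beyond that reference, which contains precisely the argument you wrote.

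One small remark on the analytic step: the claim $f'(\delta)\ge 0$ is true but your justification (``comparing Taylor expansions'') is a bit vague. A cleaner route is to first prove the elementary inequality $\ln(1+\delta)\ge \dfrac{2\delta}{2+\delta}$ for $\delta\ge 0$ (its derivative difference equals $\delta^{2}/[(1+\delta)(2+\delta)^{2}]\ge 0$), and then multiply both sides by $(1+\delta)$ and subtract $\delta$ to obtain $(1+\delta)\ln(1+\delta)-\delta\ge \delta^{2}/(2+\delta)$ directly. This avoids differentiating $f$ twice and makes the ``manifestly positive integrand'' explicit.
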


\begin{proof}[Proof of Lemma~\ref{lem:bounddiscrepancy}]
In this proof, we assume the event in Lemma~\ref{lem:bounddeg} that all
degrees of vertices are bounded by $c_{1} n p$ holds. If
$|I_{k}|\ge n/e$, then the bounded degree property implies
$e(I_{1}, \dots , I_{k})\leq |I_{1}|\cdots |I_{k-1}|c_{1}np$. Hence
\begin{align*}
\frac{e(I_{1}, \dots , I_{k})}{\overline{\mu }(I_{1},\dots ,I_{k})}=
\frac{e(I_{1}, \dots , I_{k})}{p|I_{1}|\cdots |I_{k}|} \le
\frac{|I_{1}|\cdots |I_{k-1}|c_{1}np}{p|I_{1}|\cdots |I_{k-1}|n/e}
\leq c_{1}e.
\end{align*}
This completes the proof for Case~\eqref{case:1}. Now we consider the case
when $|I_{k}|< n/e$. Let $s(I_{1},\dots , I_{k})$ be the set of all possible
distinct hyperedges between $I_{1},\dots , I_{k}$. We have for any
$\tau >1$ and any fixed $I_{1},\dots ,I_{k}$,
\begin{align*}
&\mathbb{P}(e(I_{1}, \dots , I_{k})\ge \tau \bar{\mu }(I_{1}, \dots , I_{k}))
\\
=& \mathbb{P}\left (\sum _{(i_{1},\dots ,i_{k})\in s(I_{1},\dots ,I_{k})}t_{i_{1},
\dots ,i_{k}} \ge \tau \bar{\mu }(I_{1},\dots ,I_{k}) \right )
\\
\le & \mathbb{P}\left (\sum _{(i_{1},\dots ,i_{k})\in s(I_{1},\dots ,I_{k})}
(t_{i_{1},\dots ,i_{k}}-p_{i_{1},\dots ,i_{k}})\ge (\tau -1)\bar{\mu }(I_{1},
\dots , I_{k})\right ).
\end{align*}
By Chernoff's inequality~\eqref{eq:strongcher}, the last line above is
bounded by
\begin{align*}
&\exp \left ( (\tau -1)\overline{\mu }(I_{1},\dots ,I_{k})-\tau
\overline{\mu }(I_{1},\dots ,I_{k})\log \tau \right )
\\
\leq & \exp \left ( -\frac{1}{2} \bar{\mu }(I_{1}, \dots , I_{k}) \tau
\log \tau \right ),
\end{align*}
where the last inequality holds when $\tau \geq 8$. This implies for
$\tau \geq 8$,
%
\begin{align}
\label{eq:chernoffbound}
\mathbb{P}(e(I_{1}, \dots , I_{k})\ge \tau \bar{\mu }(I_{1}, \dots , I_{k}))
\leq \exp \left ( -\frac{1}{2} \bar{\mu }(I_{1}, \dots , I_{k}) \tau
\log \tau \right ).
\end{align}
For a given number $c_{3}>0$, define $\gamma (I_{1},\dots ,I_{k})$ to be
the unique value of $\gamma $ such that
%
\begin{align}
\label{eq:defgamma}
\gamma \log \gamma =
\frac{c_{3} |I_{k}|}{\bar{\mu }(I_{1}, \dots , I_{k})} \log \left (
\frac{n}{|I_{k}|} \right ).
\end{align}
Let
$\kappa (I_{1}, \dots , I_{k}) = \max \{ 8, \gamma (I_{1}, \dots , I_{k})
\}$. Then by~\eqref{eq:chernoffbound} and~\eqref{eq:defgamma},
%
\begin{align}
&\mathbb{P}(e(I_{1}, \dots , I_{k})\ge \gamma (I_{1}, \dots , I_{k})
\bar{\mu }(I_{1}, \dots , I_{k}))
\notag
\\
&\le \exp \left ( -\frac{1}{2} \bar{\mu }(I_{1}, \dots , I_{k}) \kappa (I_{1},
\dots , I_{k}) \log \kappa (I_{1}, \dots , I_{k}) \right )
\notag
\\
&\le \exp \left [ -\frac{1}{2} c_{3} |I_{k}| \log \left (
\frac{n}{|I_{k}|} \right ) \right ].
\label{eq:cherbound2}
\end{align}
Let
$\Omega = \{(I_{1}, \dots , I_{k}): |I_{1}|\le \dots \le |I_{k}|\le
\frac{n}{e}\}$ and
\begin{equation*}
S(h_{1},\dots ,h_{k}):=\{(I_{1},\dots ,I_{k}):\forall i\in [k], |I_{i}|=h_{i}
\}.
\end{equation*}
By a union bound and~\eqref{eq:cherbound2}, we have
\begin{align*}
&\mathbb{P}\Big ( \exists (I_{1}, \dots , I_{k})\in \Omega : e(I_{1},
\dots , I_{k})\ge \gamma (I_{1}, \dots , I_{k})\bar{\mu }(I_{1},
\dots , I_{k}) \Big )
\\
& \le \sum _{(I_{1}, \dots , I_{k})\in \Omega } \exp \left [ -\frac{1}{2} c_{3}
|I_{k}| \log \left ( \frac{n}{|I_{k}|} \right ) \right ]
\\
& = \sum _{1\le h_{1}\le \dots \le h_{k}\le \frac{n}{e}} ~\sum _{(I_{1},
\dots , I_{k}) \in S(h_{1},\dots , h_{k})}\exp \Big [ -\frac{1}{2} c_{3} h_{k}
\log \Big ( \frac{n}{h_{k}} \Big ) \Big ]
\\
& = \sum _{1\le h_{1}\le \dots \le h_{k}\le \frac{n}{e}} {
\binom{n}{h_{1}}} \dots {\binom{n}{h_{k}}} \exp \Big [ -\frac{1}{2} c_{3}
h_{k} \log \Big ( \frac{n}{h_{k}} \Big ) \Big ].
\end{align*}

Since ${\binom{n}{k}} \leq (\frac{ne}{k})^{k}$ for any integer
$1\leq k\leq n$, we have the last line above is bounded by
\begin{align*}
& \sum _{1\le h_{1}\le \dots \le h_{k}\le \frac{n}{e}} \Big (
\frac{ne}{h_{1}} \Big )^{h_{1}} \dots \Big ( \frac{ne}{h_{k}} \Big )^{h_{k}}
\exp \Big [ -\frac{1}{2} c_{3} h_{k} \log \Big ( \frac{n}{h_{k}} \Big )
\Big ]
\\
\le & \sum _{1\le h_{1}\le \dots \le h_{k}\le \frac{n}{e}} \exp \Big [ -
\frac{1}{2} c_{3} h_{k} \log \Big ( \frac{n}{h_{k}} \Big ) + kh_{k}\log
\Big ( \frac{n}{h_{k}}\Big ) + kh_{k}\Big ]
\\
\le & \sum _{1\le h_{1}\le \dots \le h_{k}\le \frac{n}{e}} \exp \Big [ -
\frac{1}{2} (c_{3} - 4k) h_{k} \log \Big ( \frac{n}{h_{k}} \Big ) \Big ]
\\
\leq & \sum _{1\le h_{1}\le \dots \le h_{k}\le \frac{n}{e}} \exp \left [-
\frac{1}{2}(c_{3}-4k)\log n\right ]
\\
= &\sum _{1\le h_{1}\le \dots \le h_{k}\le \frac{n}{e}} n^{-\frac{1}{2} (c_{3}
-4k)}\le n^{-\frac{1}{2} (c_{3} -6k)}.
\end{align*}
As a result,
$e(I_{1}, \dots , I_{k})< \kappa (I_{1}, \dots , I_{k}) \bar{\mu }(I_{1},
\dots , I_{k})$ for all $(I_{1}, \dots , I_{k})\in \Omega $ with probability
at least $1- n^{-\frac{1}{2} (c_{3} -6k)}$. For any $r>0$, we can choose
$c_{3}$ large enough depending linearly on $k,r$ such that
$ 1- n^{-\frac{1}{2} (c_{3} -6k)}\leq 1-n^{-r}$. Suppose
$\kappa (I_{1}, \dots , I_{k}) = 8$, then
$e(I_{1}, \dots , I_{k})< 8 \bar{\mu }(I_{1}, \dots , I_{k})$ as desired.
Otherwise suppose
$ \kappa (I_{1}, \dots , I_{k}) =\gamma (I_{1}, \dots , I_{k})>8$, then
\begin{equation*}
\frac{e(I_{1}, \dots , I_{k})}{\bar{\mu }(I_{1}, \dots , I_{k})} <
\gamma (I_{1}, \dots , I_{k}).
\end{equation*}
Since $x\mapsto x\log x$ is an increasing function for $x\ge 1$, we have
\begin{align*}
\frac{e(I_{1}, \dots , I_{k})}{\bar{\mu }(I_{1}, \dots , I_{k})} \log
\frac{e(I_{1}, \dots , I_{k})}{\bar{\mu }(I_{1}, \dots , I_{k})} < &
\gamma (I_{1}, \dots , I_{k}) \log \gamma (I_{1}, \dots , I_{k})
\\
= &\frac{c_{3} |I_{k}|}{\bar{\mu }(I_{1}, \dots , I_{k})} \log \left (
\frac{n}{|I_{k}|} \right ),
\end{align*}
which gives the desired result for Case~\eqref{case:2}.
\end{proof}

With Lemma~\ref{lem:bounddeg} and Lemma~\ref{lem:bounddiscrepancy}, we
prove our estimates~\eqref{eq:heavy} for all heavy tuples. Recall we are
dealing with the tuples over $\overline{\mathcal{L}}^{+}$, we then have
%
\begin{align}
&\sum _{(i_{1},\dots ,i_{k})\in \overline{\mathcal{L}}^{+}} y_{1,i_{1}}
\cdots y_{k,i_{k}}\cdot t_{i_{1},\dots ,i_{k}}
\notag
\\
\leq & \sum _{
\substack{(s_{1},\dots ,s_{k}):\\2^{s_{1}+\cdots + s_{k}}\geq \sqrt{np}(2/\delta )^{k}n^{k/2-1}}}e(D_{1}^{s_{1}},
\dots , D_{k}^{s_{k}})\frac{2^{s_{1}}\delta }{\sqrt{n}}\cdots
\frac{2^{s_{k}}\delta }{\sqrt{n}}
\notag
\\
\leq & \delta ^{k}\sqrt{np} \sum _{
\substack{(s_{1},\dots ,s_{k}):\\2^{s_{1}+\cdots + s_{k}}\geq \sqrt{np}n^{k/2-1}}}
\alpha _{1,s_{1}}\cdots \alpha _{k,s_{k}}\sigma _{s_{1},\dots ,s_{k}}.
\label{eq:changeindex}
\end{align}
The last equality follows directly from definitions in~\eqref{def:alphasigma}.~\eqref{eq:changeindex} implies that it suffices
to estimate the contribution of heavy tuples through its index sets. We
then bound the contribution of heavy tuples by splitting the indices
$(s_{1},\dots ,s_{k})$ into 6 different categories. Let
%
\begin{align}
\label{eq:C}
\mathcal{C}:=\left \{  (s_{1},\cdots , s_{k}): 2^{s_{1}+\cdots + s_{k}}
\geq \sqrt{np} n^{k/2-1}, |D_{1}^{s_{1}}|\leq \cdots \leq |D_{k}^{s_{k}}|
\right \}
\end{align}
be the ordered index set for heavy tuples where we assume
$|D_{1}^{s_{1}}|\leq \cdots \leq |D_{k}^{s_{k}}|$. For the case where the
sequence $\{|D_{i}^{s_{i}}|,1\leq i\leq k\}$ have different orders can
be treated similarly, and there are $k!$ many in total. We then define
the following 6 categories in $\mathcal{C}$:
\begin{align*}
\mathcal{C}_{1} &=\left \{  (s_{1},\dots ,s_{k})\in \mathcal{C}:\sigma _{s_{1},
\dots ,s_{k}}\leq 1 \right \}  ,
\\
\mathcal{C}_{2} &=\left \{  (s_{1},\dots ,s_{k})\in \mathcal{C}\setminus
\mathcal{C}_{1}:\lambda _{s_{1},\dots ,s_{k}}\leq ec_{2} \right \}  ,
\\
\mathcal{C}_{3} &=\left \{  (s_{1},\dots ,s_{k})\in \mathcal{C}
\setminus (\mathcal{C}_{1}\cup \mathcal{C}_{2}): 2^{s_{1}+s_{2}+\cdots +s_{k-1}-s_{k}}
\geq n^{k/2-1}\sqrt{np} \right \}  ,
\\
\mathcal{C}_{4} &= \{ (s_{1},\dots ,s_{k})\in \mathcal{C}\setminus (
\mathcal{C}_{1}\cup \mathcal{C}_{2} \cup \mathcal{C}_{3}): \log \lambda _{s_{1},
\dots ,s_{k}}>\frac{1}{2} s_{k}\log 2+\frac{1}{4}\log (\alpha _{k,s_{k}}^{-1})
\},
\\
\mathcal{C}_{5} &=\left \{  (s_{1},\dots ,s_{k})\in \mathcal{C}
\setminus (\mathcal{C}_{1}\cup \mathcal{C}_{2} \cup \mathcal{C}_{3}\cup
\mathcal{C}_{4}):2s_{k}\log 2\geq \log (1/\alpha _{k,s_{k}}) \right \}  ,
\\
\mathcal{C}_{6} &= \mathcal{C}\setminus (\mathcal{C}_{1}\cup \mathcal{C}_{2}
\cup \mathcal{C}_{3}\cup \mathcal{C}_{4}\cup \mathcal{C}_{5}).
\end{align*}
For the rest of the proof, we will show for all $6$ categories
$\{\mathcal{C}_{t}, 1\leq t\leq 6\}$,
%
\begin{align}
\label{eq:heavytuplebound}
\sum _{ (s_{1},\dots ,s_{k})\in \mathcal{C}} \alpha _{1,s_{1}}\cdots
\alpha _{k,s_{k}}\sigma _{s_{1},\dots ,s_{k}}\mathbf{1}\{ (s_{1},
\dots , s_{k})\in \mathcal{C}_{t}\}\leq C_{k} \log ^{k-2}(n)
\end{align}
for some constant $C_{k}$ depending only on $k,c_{1},c_{2},c_{3}$ and
$\delta $, where the constants $c_{1},c_{2},c_{3}$ are the same ones as
in Lemma~\ref{lem:bounddeg} and Lemma~\ref{lem:bounddiscrepancy}. Here
$C_{k}$ depends exponentially on $k$ and linearly on $r$. We will repeatedly
use the following estimate which follows from~\eqref{eq:defalpha}:
%
\begin{align}
\label{eq:repeatinequality}
\sum _{s_{i}=1}^{\left \lceil \log _{2} (\sqrt{n}/\delta ) \right
\rceil }\alpha _{i,s_{i}}\leq \sum _{j\in [n]}|2y_{i,j}/\delta |^{2}
\leq (2/\delta )^{2}, \quad \forall 1\leq i\leq k.
\end{align}

From now on, for simplicity, whenever we are summing over $s_{i}$ for some
$1\leq i\leq k$, the range of $s_{i}$ is understood as
$1\leq s_{i}\leq \left \lceil \log _{2} (\sqrt{n}/\delta ) \right
\rceil $.

\subsubsection*{Tuples in $\mathcal{C}_{1}$}

In this case we get
\begin{align*}
&\sum _{(s_{1},\dots ,s_{k})\in \mathcal{C}}\alpha _{1,s_{1}}\cdots
\alpha _{k,s_{k}} \sigma _{s_{1},\dots , s_{k}}\mathbf{1}\{ (s_{1},
\dots , s_{k})\in \mathcal{C}_{1}\}
\\
\leq &\sum _{(s_{1},\dots , s_{k})\in \mathcal{C}}\alpha _{1,s_{1}}
\cdots \alpha _{k,s_{k}} \leq (2/\delta )^{2k} ,
\end{align*}
where the last inequality comes from~\eqref{eq:repeatinequality}.

\subsubsection*{Tuples in $\mathcal{C}_{2}$}

The constraint on $\mathcal{C}_{2}$ is the same as the condition in Case~\eqref{case:1} of Lemma~\ref{lem:bounddiscrepancy}. Recall Definition~\eqref{def:alphasigma} and~\eqref{eq:C}. We have
\begin{align*}
\sigma _{s_{1},\dots , s_{k}}& =\lambda _{s_{1},\dots , s_{k}}n^{k/2-1}
\sqrt{np}\cdot 2^{-(s_{1}+\cdots +s_{k})}\leq \lambda _{s_{1},\dots , s_{k}}
\leq e c_{2}.
\end{align*}
Therefore,
\begin{align*}
& \sum _{(s_{1},\dots , s_{k})\in \mathcal{C}}\alpha _{1,s_{1}}\cdots
\alpha _{k,s_{k}}\sigma _{s_{1},\dots , s_{k}} \mathbf{1}\left \{  (s_{1},
\dots , s_{k})\in \mathcal{C}_{2}\right \}
\\
\leq & e{c_{2}} \sum _{s_{1},\dots , s_{k}}\alpha _{1,s_{1}}\cdots
\alpha _{k,s_{k}}\leq ec_{2}(2/\delta )^{2k} .
\end{align*}

\subsubsection*{Tuples in $\mathcal{C}_{3}$}

By Lemma~\ref{lem:bounddeg}, all $(k-1)$-tuples have bounded degrees. Therefore
we have
\begin{align*}
e(D_{1}^{s_{1}},\dots ,D_{k}^{s_{k}})\leq c_{1}|D_{1}^{s_{1}}|\cdots |D_{k-1}^{s_{k-1}}|
np.
\end{align*}
Hence by Definition~\eqref{def:lambdamu},
%
\begin{align}
\label{eq:lambdaupperbound}
\lambda _{s_{1},\dots ,s_{k}}=
\frac{e(D_{1}^{s_{1}},\dots , D_{k}^{s_{k}})}{p |D_{1}^{s_{1}}|\cdots |D_{k}^{s_{k}}| }
\leq \frac{c_{1}n}{ |D_{k}^{s_{k}}|}.
\end{align}
Therefore we have
%
\begin{align}
& \sum _{(s_{1},\dots , s_{k})\in \mathcal{C}}\alpha _{1,s_{1}}\cdots
\alpha _{k,s_{k}}\sigma _{s_{1},\dots , s_{k}} \mathbf{1}\left \{  (s_{1},
\dots , s_{k})\in \mathcal{C}_{3}\right \}
\notag
\\
= &\sum _{(s_{1},\dots , s_{k})\in \mathcal{C}_{3}}\alpha _{1,s_{1}}
\alpha _{2,s_{2}}\cdots \alpha _{k,s_{k}}\lambda _{s_{1},\dots ,s_{k}}n^{k/2-1}
\sqrt{np}\cdot 2^{-(s_{1}+\cdots +s_{k})}
\notag
\\
\leq & \sum _{(s_{1},\dots ,s_{k})\in \mathcal{C}_{3}}\alpha _{1,s_{1}}
\cdots \alpha _{k-1,s_{k-1}}\frac{|D_{k}^{s_{k}}|2^{2s_{k}}}{n}
\frac{c_{1} n}{ |D_{k}^{s_{k}}|}n^{k/2-1}\sqrt{np} \cdot 2^{-(s_{1}+
\cdots + s_{k})}
\notag
\\
= &c_{1} n^{k/2-1}\sqrt{np}\sum _{(s_{1},\dots ,s_{k})\in \mathcal{C}_{3}}
\alpha _{1,s_{1}}\cdots \alpha _{k-1,s_{k-1}} 2^{s_{k}-(s_{2}+\cdots +s_{k-1})}
\notag
\\
= &c_{1}\sum _{s_{1},\dots ,s_{k}}\alpha _{1,s_{1}}\cdots \alpha _{k-1,s_{k-1}}n^{k/2-1}
\sqrt{np}\cdot 2^{s_{k}-(s_{2}+\cdots +s_{k-1})}\mathbf{1}\left \{  (s_{1},
\dots , s_{k})\in \mathcal{C}_{3}\right \}  ,
\label{eq:C3}
\end{align}
where the inequality in the third line is from~\eqref{eq:lambdaupperbound}. Since for all
$(s_{1},\dots ,s_{k})\in \mathcal{C}_{3}$ we have
$n^{k/2-1}\sqrt{np}\cdot 2^{s_{k}-(s_{2}+\cdots +s_{k-1})}\leq 1$, it implies
that
\begin{align*}
\sum _{s_{k}}n^{k/2-1}\sqrt{np}\cdot 2^{s_{k}-(s_{1}+\cdots +s_{k-1})}
\mathbf{1}\left \{  (s_{1},\dots ,s_{k})\in \mathcal{C}_{3}\right \}
\leq \sum _{i=0}^{\infty }2^{-i}\leq 2.
\end{align*}
In view of~\eqref{eq:repeatinequality}, we can bound~\eqref{eq:C3} by
\begin{equation*}
2c_{1}\sum _{s_{1},\dots ,s_{k-1}}\alpha _{1,s_{1}}\cdots \alpha _{k-1,s_{k-1}}
\leq 2c_{1}(2/\delta )^{2k-2}.
\end{equation*}
This completes the proof for the case of $\mathcal{C}_{3}$. For the remaining
categories $\mathcal{C}_{4},\mathcal{C}_{5}$ and $\mathcal{C}_{6}$, we rely
on the Case~\eqref{case:2} in the bounded discrepancy lemma. Recall
$\mathcal{C}_{2}$ corresponds to Case~\eqref{case:1} in Lemma~\ref{lem:bounddiscrepancy}. Therefore Case~\eqref{case:2} must hold in
$\mathcal{C}_{4},\mathcal{C}_{5}$ and $\mathcal{C}_{6}$. Case~\eqref{case:2} in Lemma~\ref{lem:bounddiscrepancy} can be written as
\begin{align*}
\lambda _{s_{1},\dots ,s_{k}}|D_{1}^{s_{1}}|\cdots |D_{k}^{s_{k}}|
\cdot p\log \lambda _{s_{1},\dots ,s_{k}}\leq c_{3}|D_{k}^{s_{k}}|
\log \left (\frac{n}{|D_{k}^{s_{k}}|}\right ).
\end{align*}
By definitions in~\eqref{def:alphasigma}, the inequality above is equivalent
to
%
\begin{align}
\label{eq:discrepancy2}
&\alpha _{1,s_{1}}\cdots \alpha _{k-1,s_{k-1}} \sigma _{s_{1},\dots ,s_{k}}
\log \lambda _{s_{1},\dots , s_{k}}
\notag
\\
\leq & c_{3}
\frac{2^{s_{1}+\cdots +s_{k-1}-s_{k}}}{n^{k/2-1}\sqrt{np}} \left (2s_{k}
\log 2+\log \alpha _{k,s_{k}}^{-1}\right ).
\end{align}
For the remaining of our proof, we will repeatedly use~\eqref{eq:discrepancy2}.

\subsubsection*{Tuples in $\mathcal{C}_{4}$}

The inequality
$ \log \lambda _{s_{1},\dots ,s_{k}}>\frac{1}{4}\left ( 2s_{k}\log 2+
\log (1/\alpha _{k,s_{k}})\right ) $ in the assumption of
$\mathcal{C}_{4}$ and~\eqref{eq:discrepancy2} imply that
\begin{align*}
\alpha _{1,s_{1}}\cdots \alpha _{k-1,s_{k-1}} \sigma _{s_{1},\dots ,s_{k}}
\leq 4c_{3}n^{1-k/2}\cdot 2^{s_{1}+\cdots + s_{k-1}-s_{k}}/\sqrt{np}.
\end{align*}
Then we have
%
\begin{align}
&\sum _{(s_{1},\dots , s_{k})\in \mathcal{C}}\alpha _{1,s_{1}}\cdots
\alpha _{k,s_{k}}\sigma _{s_{1},\dots , s_{k}} \mathbf{1}\{(s_{1},
\dots ,s_{k})\in \mathcal{C}_{4}\}
\notag
\\
=&\sum _{s_{k}}\alpha _{k,s_{k}}\sum _{s_{1},\dots ,s_{k-1}}\alpha _{1,s_{1}}
\cdots \alpha _{k-1,s_{k-1}}\sigma _{s_{1},\dots ,s_{k}}\mathbf{1}\{(s_{1},
\dots ,s_{k})\in \mathcal{C}_{4}\}
\notag
\\
\leq &4c_{3} \sum _{s_{k}}\alpha _{k,s_{k}}\sum _{s_{1},\dots ,s_{k-1}}
\frac{2^{s_{1}+\cdots + s_{k-1}-s_{k}}}{n^{k/2-1}\sqrt{np}}\mathbf{1}
\{(s_{1},\dots ,s_{k})\in \mathcal{C}_{4}\}.
\label{eq:geometric}
\end{align}
Since $(s_{1},\dots ,s_{k})\not \in \mathcal{C}_{3}$, we have
$ \frac{2^{s_{1}+\cdots + s_{k-1}-s_{k}}}{n^{k/2-1}\sqrt{np}}\leq 1$ for
all $(s_{1},\dots ,s_{k})\in \mathcal{C}_{4}$. Therefore~\eqref{eq:geometric} is bounded by
%
\begin{align}
&4c_{3} \sum _{s_{k}}\alpha _{k,s_{k}}\sum _{s_{1},\dots ,s_{k-2}}
\sum _{s_{k-1}}
\frac{2^{s_{1}+\cdots + s_{k-1}-s_{k}}}{n^{k/2-1}\sqrt{np}}
\mathbf{1}\{(s_{1},\dots ,s_{k})\in \mathcal{C}_{4}\}
\notag
\\
\leq & 4c_{3} \sum _{s_{k}}\alpha _{k,s_{k}}\sum _{s_{1},\dots ,s_{k-2}}
2\leq 8c_{3}\sum _{s_{k}}\alpha _{k,s_{k}} (\log _{2}(\sqrt{n}/\delta )+1)^{k-2},
\label{eq:extralog}
\end{align}
where the last inequality is from the fact that $s_{i}$ satisfies
$1\leq s_{i}\leq \left \lceil \log _{2} (\sqrt{n}/\delta ) \right
\rceil $ for $i\in [k]$ (see~\eqref{eq:rangeS}). Therefore~\eqref{eq:extralog} can be bounded by
%
\begin{align}
\label{eq:Clog}
8c_{3}\left (\frac{1}{2}\log _{2} n -\log _{2}(\delta )+1\right )^{k-2}
(2/\delta )^{2}\leq C\log ^{k-2}(n)
\end{align}
for a constant $C$ depending only on $\delta ,k$ and $c_{3}$.

\subsubsection*{Tuples in $\mathcal{C}_{5}$}

In this case we have
$ 2s_{k}\log 2\geq \log (\alpha _{k,s_{k}}^{-1}) $. Also because
$(s_{1},\dots , s_{k})\notin \mathcal{C}_{4}$, we have
%
\begin{align}
\label{eq:C5bound}
\log \lambda _{s_{1},\dots ,s_{k}}\leq \frac{1}{4}\left ( 2s_{k}\log 2+
\log (1/\alpha _{k,s_{k}})\right )\leq s_{k}\log 2,
\end{align}
thus $\lambda _{s_{1},\dots ,s_{k}}\leq 2^{s_{k}}$. On the other hand,
because $(s_{1},\dots , s_{k})\not \in \mathcal{C}_{1}$,
\begin{align*}
1<\sigma _{s_{1},\dots ,s_{k}}=\lambda _{s_{1},\dots ,s_{k}}n^{k/2-1}
\sqrt{np}\cdot 2^{-(s_{1}+\cdots +s_{k})}\leq n^{k/2-1}\sqrt{np}
\cdot 2^{-(s_{1}+\cdots + s_{k-1})}.
\end{align*}
Therefore we have
%
\begin{align}
\label{eq:s_1_to_s_k}
2^{s_{1}+\cdots + s_{k-1}}\leq n^{k/2-1} \sqrt{np} .
\end{align}
In addition, since $(s_{1},\dots , s_{k})\not \in \mathcal{C}_{2}$, we have
$\lambda _{s_{1},\dots ,s_{k}}>e c_{2}>e$, which implies
$\log \lambda _{s_{1},\dots ,s_{k}}\geq 1 $. Recall~\eqref{eq:discrepancy2}, together with~\eqref{eq:C5bound}, we then have
\begin{align*}
\alpha _{1,s_{1}}\cdots \alpha _{k-1,s_{k-1}} \sigma _{s_{1},\dots ,s_{k}}
&\leq \alpha _{1,s_{1}}\cdots \alpha _{k-1,s_{k-1}}\sigma _{s_{1},
\dots ,s_{k}}\log \lambda _{s_{1},\dots , s_{k}}
\\
&\leq c_{3}
\frac{2^{s_{1}+\cdots +s_{k-1}-s_{k}}}{n^{k/2-1}\sqrt{np}} \left (2s_{k}
\log 2+\log \alpha _{k,s_{k}}^{-1}\right )
\\
&\leq 4c_{3}\log 2\cdot s_{k}
\frac{2^{s_{1}+\cdots +s_{k-1}-s_{k}}}{ n^{k/2-1}\sqrt{np}} .
\end{align*}
Therefore,
%
\begin{align}
&\sum _{(s_{1},\dots ,s_{k})\in \mathcal{C}}\alpha _{1,s_{1}}\cdots
\alpha _{k,s_{k}}\sigma _{s_{1},\dots ,s_{k}}\mathbf{1}\left \{  (s_{1},
\dots s_{k})\in \mathcal{C}_{5} \right \}
\notag
\\
=&\sum _{s_{k}}\alpha _{k,s_{k}}\sum _{s_{1},\dots ,s_{k-1}}\alpha _{1,s_{1}}
\cdots \alpha _{k-1,s_{k-1}}\sigma _{s_{1},\dots ,s_{k}}\mathbf{1}
\left \{  (s_{1},\dots s_{k})\in \mathcal{C}_{5} \right \}
\notag
\\
\leq &\sum _{s_{k}}\alpha _{k,s_{k}}\sum _{s_{1},\dots ,s_{k-1}}4c_{3}
\log 2\cdot s_{k}
\frac{2^{s_{1}+\cdots +s_{k-1}-s_{k}}}{n^{k/2-1}\sqrt{np}}\mathbf{1}
\left \{  (s_{1},\dots s_{k})\in \mathcal{C}_{5} \right \}
\notag
\\
\leq &4c_{3}\sum _{s_{k}}\alpha _{k,s_{k}}\cdot s_{k}2^{-s_{k}}\sum _{s_{1},
\dots ,s_{k-1}}\frac{2^{s_{1}+\cdots +s_{k-1}}}{n^{k/2-1}\sqrt{np}}
\mathbf{1}\left \{  (s_{1},\dots s_{k})\in \mathcal{C}_{5} \right \}  .
\label{eq:C5}
\end{align}
From~\eqref{eq:s_1_to_s_k}, we have
$ \frac{2^{s_{1}+\cdots +s_{k-1}}}{n^{k/2-1}\sqrt{np}}\leq 1$ for any
$(s_{1},\dots ,s_{k})\in \mathcal{C}_{5}$. Note that
$s_{k}\cdot 2^{-s_{k}}\leq \frac{1}{2}$, therefore there exists a constant
$C$ depending only on $\delta ,k$ and $c_{3}$ such that~\eqref{eq:C5} can
be bounded by
\begin{align*}
& 2c_{3} \cdot \sum _{s_{k}}\alpha _{k,s_{k}} \sum _{s_{1},\dots ,s_{k-1}}
\frac{2^{s_{1}+\cdots +s_{k-1}}}{n^{k/2-1}\sqrt{np}}\mathbf{1}\left
\{  (s_{1},\dots s_{k})\in \mathcal{C}_{5} \right \}
\\
\leq & 2c_{3} (2/\delta )^{2} (\log _{2}(\sqrt{n}/\delta )+1)^{k-2}
\leq C\log ^{k-2}(n),
\end{align*}
where the inequality above follows in the same way as in~\eqref{eq:extralog} and~\eqref{eq:Clog}.

\subsubsection*{Tuples in $\mathcal{C}_{6}$}

In this case we have $2s_{k}\log 2<\log (\alpha _{k,s_{k}}^{-1}) $. Because
$(s_{1},\dots s_{k})\not \in \mathcal{(}C_{4}\cup \mathcal{C}_{2})$, we have
\begin{align*}
1\leq \log \lambda _{s_{1},\dots ,s_{k}}\leq \frac{1}{4}\left [ 2s_{k}
\log 2+\log (1/\alpha _{k,s_{k}})\right ]\leq \frac{1}{2}\log \alpha _{k,s_{k}}^{-1}
\leq \log \alpha _{k,s_{k}}^{-1},
\end{align*}
which implies
$ \lambda _{s_{1},\dots ,s_{k}}\alpha _{k,s_{k}}\leq 1$. Recall Definition~\eqref{def:alphasigma}. We obtain
%
\begin{align}
&\sum _{(s_{1},\dots ,s_{k})\in \mathcal{C}}\alpha _{1,s_{1}}\cdots
\alpha _{k,s_{k}}\sigma _{s_{1},\dots ,s_{k}}\mathbf{1}\left \{  (s_{1},
\dots , s_{k})\in \mathcal{C}_{6} \right \}
\notag
\\
=&\sum _{(s_{1},\dots ,s_{k-1},s_{k})\in \mathcal{C}_{6}}\alpha _{1,s_{1}}
\cdots \alpha _{k-1,s_{k-1}}\alpha _{k,s_{k}}\lambda _{s_{1},\dots ,s_{k}}n^{k/2-1}
\sqrt{np}\cdot 2^{-(s_{1}+\cdots +s_{k})}
\notag
\\
\leq &\sum _{s_{1},\dots ,s_{k-1}}\alpha _{1,s_{1}}\cdots \alpha _{k-1,s_{k-1}}
\sum _{s_{k}}n^{k/2-1}\sqrt{np}\cdot 2^{-(s_{1}+\cdots +s_{k})}
\mathbf{1}\left \{  (s_{1},\dots , s_{k})\in \mathcal{C}_{6} \right \}  .
\label{eq:C6}
\end{align}
Recall from~\eqref{eq:C},
$2^{s_{1}+\cdots + s_{k}}\geq \sqrt{np}\cdot n^{k/2-1}$, we have
\begin{equation*}
\sqrt{np}\cdot 2^{-(s_{1}+\cdots +s_{k})}\leq n^{1-k/2}.
\end{equation*}
for all $(s_{1},\dots ,s_{k})\in \mathcal{C}_{5}$. Hence
\begin{equation*}
\sum _{s_{k}}n^{k/2-1}\sqrt{np}\cdot 2^{-(s_{1}+\cdots +s_{k})}
\mathbf{1}\left \{  (s_{1},\dots , s_{k})\in \mathcal{C}_{6} \right \}
\leq 2.
\end{equation*}
Therefore~\eqref{eq:C6} can be bounded by
\begin{equation*}
2 \sum _{s_{1},\dots ,s_{k-1}}\alpha _{1,s_{1}}\cdots \alpha _{k-1,s_{k-1}}
\leq 2(2/\delta )^{2k-2}.
\end{equation*}
Combining all the estimates from $\mathcal{C}_{1}$ to
$\mathcal{C}_{6}$, we have~\eqref{eq:heavytuplebound} holds. This completes
the proof of Theorem~\ref{thm:lei}.

\section{Proof of Theorem \ref{thm:lowerboundSpec}}
\label{sec5}

Let $e_{1}=(1,0,\dots ,0)$ be a unit vector in $\mathbb{R}^{n}$ and denote
$W=T-\mathbb{E}T$. Define a matrix $A\in \mathbb{R}^{n\times n}$ such that
$a_{i_{1},i_{2}}=w_{i_{1},i_{2},1,\dots ,1}$. By the definition of the
spectral norm,
\begin{align*}
\|T-\mathbb{E}T\|&\geq \max _{\|x\|_{2}=\|y\|_{2}=1}| \langle W,x
\otimes y\otimes e_{1}\otimes \cdots \otimes e_{1}\rangle |
\\
&=\max _{\|x\|_{2}=\|y\|_{2}=1}\left | \sum _{i_{1},i_{2}}w_{i_{1},i_{2},1,
\dots ,1}\cdot x_{i_{1}}y_{i_{2}}\right |=\|A\|.
\end{align*}
Note that $A$ is an $n\times n$ sparse random matrix with independent centered
Bernoulli entries. Since the limiting singular value distribution of
$\frac{A}{\sqrt{np}}$ is known (see for example Theorem 3.6 in
\cite{bai2010spectral}), the largest singular value of $A$ is at least
$(2-o(1))\sqrt{np}$ with high probability. Therefore
$\|T-\mathbb{E}T\|\geq \sqrt{np}$ with high probability. This completes
the proof.

\section{Proof of Theorem \ref{thm:main}}
\label{sec:unfolding}

When $p=o(\log n/n)$, a direct application of the Kahn-Szemer{\'{e}}di argument
would fail since the bounded degree and bounded discrepancy properties
in the proof of Theorem~\ref{thm:lei} do not hold with high probability.
We will use the following operation called tensor unfolding, in the proof
of Theorem~\ref{thm:main}. For more details, see
\cite{kolda2009tensor,wang2017operator}.

\begin{definition}[Partition]
For any $n$, the $l$-partition $\pi $ of $[k]$ is a collection
$\{B_{1}^{\pi },\dots , B_{l}^{\pi } \}$ of $l$ disjoint non-empty subsets
$B_{i}^{\pi }, 1\leq i\leq l$ such that $\cup _{i=1}^{l} B_{i}^{\pi }=[k]$.
\end{definition}

\begin{definition}[Tensor unfolding]%
\label{def:tensorunfolding}
Let $T\in \mathbb{R}^{n\times \cdots \times n}$ be an order-$k$ tensor and
$\pi $ be an $l$-partition of $[k]$. The partition of $\pi $ defines a
map
\begin{align*}
\phi _{\pi }: & [n]^{k} \to \left [n^{|B_{1}^{\pi }|}\right ]\times
\cdots \times \left [n^{|B_{l}^{\pi }|}\right ],\quad &\phi _{\pi }(i_{1},
\dots ,i_{k})=(m_{1},\dots ,m_{l}),
\end{align*}
where
\begin{equation*}
m_{j}=1+\prod _{r\in B_{j}^{\pi}}(i_{r}-1)J_r, \quad J_r=\prod_{\substack{l\in B_j^{\pi},\\ l<r}} n.
\end{equation*}
The map $\phi _{\pi }$ induces an unfolding action
$T\to \textnormal{Unfold}_{\pi }(T)$, where
$\textnormal{Unfold}_{\pi }(T)$ is a tensor of order $l$ such that
$\textnormal{Unfold}_{\pi }(T)_{m_{1},\dots ,m_{l}}=T_{i_{1},\dots ,i_{k}}
$.
\end{definition}
We will use the following inequality between the spectral norms of the
original tensor $T$ and the unfolded tensor
$\textnormal{Unfold}_{\pi }(T)$.
%
\begin{lemma}[Proposition 4.1 in \cite{wang2017operator}]%
\label{lem:unfolding}
For any order-$k$ tensor $T$ and any partition $\pi $ of $[k]$, we have
$ \|T\| \leq \|\textnormal{Unfold}_{\pi }(T)\| $.
\end{lemma}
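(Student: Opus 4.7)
The plan is to realize the spectral norm of $T$ as a supremum over rank-one test tensors $x_1\otimes\cdots\otimes x_k$ and to show that every such test tensor can be ``repackaged'' into a valid rank-one test tensor for $\mathrm{Unfold}_\pi(T)$ with the same value of the multilinear form. Since the supremum defining $\|\mathrm{Unfold}_\pi(T)\|$ ranges over a strictly larger set of rank-one tensors (namely all rank-one tensors in $\mathbb{R}^{n^{|B_1^\pi|}}\otimes\cdots\otimes\mathbb{R}^{n^{|B_l^\pi|}}$, not just those coming from further factoring), this will give the claimed inequality.

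Concretely, I would fix unit vectors $x_1,\dots,x_k\in\mathbb{R}^n$ and, for each block $B_j^\pi$ of the partition $\pi$, define
\begin{equation*}
y_j \;:=\; \bigotimes_{r\in B_j^\pi} x_r,
\end{equation*}
viewed as a vector in $\mathbb{R}^{n^{|B_j^\pi|}}$ via the index map $\phi_\pi$ from Definition~\ref{def:tensorunfolding}. Then $\|y_j\|_2 = \prod_{r\in B_j^\pi}\|x_r\|_2 = 1$, so each $y_j$ is a unit vector. The key computation is to check that the product structure of the entries is preserved by $\phi_\pi$: for $(i_1,\dots,i_k)$ mapped to $(m_1,\dots,m_l)$ one has $\prod_{r\in B_j^\pi} x_{r,i_r} = y_{j,m_j}$, and multiplying these over $j=1,\dots,l$ recovers $\prod_{r=1}^k x_{r,i_r}$. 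Summing against $T$ and using $\mathrm{Unfold}_\pi(T)_{m_1,\dots,m_l} = T_{i_1,\dots,i_k}$ gives
\begin{equation*}
T(x_1,\dots,x_k) \;=\; \mathrm{Unfold}_\pi(T)(y_1,\dots,y_l).
\end{equation*}

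Taking absolute values and then the supremum over all unit $x_1,\dots,x_k$ on the left yields $\|T\|$, while the right-hand side is bounded by $\|\mathrm{Unfold}_\pi(T)\|$ since each $y_j$ is a unit vector in its ambient space. This closes the argument. The only nontrivial bookkeeping is verifying that the index map $\phi_\pi$ in Definition~\ref{def:tensorunfolding} indeed turns the product $\prod_{r\in B_j^\pi} x_{r,i_r}$ into the $m_j$-th entry of $\bigotimes_{r\in B_j^\pi} x_r$ under the standard lexicographic identification $\mathbb{R}^{n}\otimes\cdots\otimes\mathbb{R}^{n}\cong\mathbb{R}^{n^{|B_j^\pi|}}$; this is a direct unwinding of the formula for $m_j$ and is the only place one needs to be careful. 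No obstacle beyond that is expected, and the inequality is in general strict because not every unit vector $y_j\in\mathbb{R}^{n^{|B_j^\pi|}}$ is an outer product of unit vectors in $\mathbb{R}^n$.
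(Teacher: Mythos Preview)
Your argument is correct and is the standard proof of this inequality. Note, however, that the paper does not supply its own proof of Lemma~\ref{lem:unfolding}: it simply quotes the result as Proposition~4.1 of \cite{wang2017operator} and uses it as a black box. So there is no ``paper's proof'' to compare against here; your embedding of each rank-one test tensor $x_1\otimes\cdots\otimes x_k$ for $T$ into a rank-one test tensor $y_1\otimes\cdots\otimes y_l$ for $\mathrm{Unfold}_\pi(T)$ via $y_j=\bigotimes_{r\in B_j^\pi}x_r$, together with the observation that the supremum defining $\|\mathrm{Unfold}_\pi(T)\|$ ranges over a strictly larger set, is exactly the natural argument and fills the gap cleanly.
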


With Lemma~\ref{lem:unfolding}, we are ready to prove Theorem~\ref{thm:main}.

\begin{proof}[Proof of Theorem~\ref{thm:main}]
(1) Assume $p\geq \frac{c\log n}{n^{m}}$ with an integer $m$ such that
$k/2\leq m\leq k-1$. Consider a $2$-partition of $[k]$ denoted by
$\pi _{1}=\{ \{1,2,\dots ,m\}, \{ m+1,\dots , k\}\}$. From Lemma~\ref{lem:unfolding}, we have
%
\begin{align}
\label{eq:conT}
\|T-\mathbb{E}T\| \leq \|\textnormal{Unfold}_{\pi _{1}}(T-\mathbb{E}T)
\|.
\end{align}
Here $\textnormal{Unfold}_{\pi _{1}}(T-\mathbb{E}T)$ is an
$n^{k-m}\times n^{m}$ random matrix whose entries are one-to-one correspondent
to entries in $T-\mathbb{E}T$. Let
$A\in \mathbb{R}^{n^{m}}\times \mathbb{R}^{n^{m}}$ be a matrix such that
\begin{equation*}
A_{i,j}=
\begin{cases}
(\textnormal{Unfold}_{\pi _{1}}(T))_{i,j} & \text{ if } i
\in [n^{k-m}], j\in [n^{m}],
\\
0 & \text{ otherwise.}
\end{cases}
\end{equation*}
Then $A$ is an adjacency matrix of a random directed graph on
$n^{m}$ many vertices with
\begin{equation*}
p\geq \frac{c\log n}{n^{m}}=\frac{c}{m}\cdot
\frac{\log (n^{m})}{n^{m}}.
\end{equation*}
Then we apply Theorem~\ref{thm:lei} with the matrix case. For any
$r>0$, there is a constant $C>0$ depending on $r$ and $\frac{c}{m}$ such
that $\|A-\mathbb{E}A\| \leq C\sqrt{n^{m}p}$ with probability at least
$1-n^{-rm}$. Then from~\eqref{eq:conT}, with probability at least
$1-n^{-r}$,
\begin{equation*}
\|T-\mathbb{E}T\| \leq \|\textnormal{Unfold}_{\pi _{1}}(T-\mathbb{E}T)
\|\leq \|A-\mathbb{E}A\| \leq C\sqrt{n^{m}p}.
\end{equation*}
This completes the proof of~\eqref{eq:unfoldeq1}.

(2) Assume $p\geq \frac{c\log n}{n^{m}}$ with an integer $m$ such that
$1\leq m<k/2$. Denote $k=ml+s$ for some $l\geq 1, 0\leq s\leq m-1$.

When $s\not =0$, let $\pi _{2}$ be a $(l+1)$-partition of $[k]$ into $l+1$ parts
given by
\begin{equation*}
\pi _{2}=\{ \{1,\dots ,m\}, \{m+1,\dots , 2m\},\dots , \{ml+1,\dots ,ml+s
\} \}.
\end{equation*}
Then
$\textnormal{Unfold}_{\pi _{2}}(T-\mathbb{E}T)\in \mathbb{R}^{n^{m}}
\times \cdots \times \mathbb{R}^{n^{m}}\times \mathbb{R}^{n^{s}} $ is a tensor
of order $(l+1)$. Since
$p\geq \frac{c}{m} \frac{\log (n^{m})}{n^{m}}$, we can apply Theorem~\ref{thm:lei} and Lemma~\ref{lem:unfolding}. Then with probability at least
$1-n^{-r}$, we have
\begin{align*}
\| T-\mathbb{E}T \| &\leq \| \textnormal{Unfold}_{\pi _{2}}(T-
\mathbb{E}T)\|
\\
&\leq C\sqrt{n^{m} p}\log ^{l-1} (n^{m})\leq C_{1}\sqrt{n^{m}p}\log ^{
\frac{k-1}{m}-1}(n),
\end{align*}
where $C_{1}$ is a constant depending on $k,r,c$.

When $s=0$, we can similarly define $\pi _{2}$ as a $l$-partition of
$[k]$ into $l$ blocks such that
$\textnormal{Unfold}_{\pi _{2}}(T-\mathbb{E}T)\in \mathbb{R}^{n^{m}}
\times \cdots \times \mathbb{R}^{n^{m}} $ is a tensor of order $l$. With
probability at least $1-n^{-r}$, for a constant $C_{2}$ depending on
$k,r,c$, we have
\begin{align*}
\| T-\mathbb{E}T \|\leq C_{2}\sqrt{n^{m}p}\log ^{k/m -2}(n).
\end{align*}
This completes the proof of Theorem~\ref{thm:main}.
\end{proof}

\section{Proof of Theorem~\ref{thm:minimax}}
\label{sec:lowerbound}

In this section, we will prove Theorem~\ref{thm:minimax}. We first compute
the packing number over the parameter space under the spectral norm, then
apply Fano's inequality. We first introduce several useful lemmas for showing
this result. We will use the version in~\cite{Tsybakov2008introduction}.

\begin{lemma}[Varshamov-Gilbert bound]%
\label{lem:VG:bound}
For $n\ge 8$, there exists a subset $S\subset \{0,1\}^{n}$ such that
$|S|\ge 2^{n/8}+1$ and for every distinct pair of
$\omega ,\omega '\in S$, the Hamming distance satisfies
\begin{equation*}
H(\omega ,\omega '):=\|\omega -\omega '\|_{1}> n/8.
\end{equation*}
\end{lemma}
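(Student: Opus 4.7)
The plan is to use the classical greedy sphere-covering argument. Set $d = \lfloor n/8\rfloor$ and let $V_n := \sum_{j=0}^{d}\binom{n}{j}$ denote the volume of a Hamming ball of radius $d$ in $\{0,1\}^n$. I would construct $S$ greedily: start with any point, and repeatedly add any vector $\omega\in\{0,1\}^n$ whose Hamming distance to every already chosen element exceeds $n/8$. Stop when no such vector exists. At termination, the open Hamming balls of radius strictly greater than $n/8$ (equivalently, the closed balls of radius $d$) centered at the elements of $S$ cover $\{0,1\}^n$, so $|S|\cdot V_n \geq 2^n$, giving $|S|\geq 2^n/V_n$.

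It then suffices to show that $2^n/V_n \geq 2^{n/8}+1$ for every $n\geq 8$, i.e.\ $V_n (2^{n/8}+1)\leq 2^n$. The key step is the standard entropy bound
\begin{equation*}
V_n = \sum_{j=0}^{\lfloor n/8\rfloor}\binom{n}{j} \leq 2^{n H(1/8)},
\end{equation*}
where $H(p) = -p\log_2 p - (1-p)\log_2(1-p)$ is the binary entropy; this follows from the well-known Chernoff-type estimate on tails of a symmetric Binomial (or directly from $\sum_{j\leq pn}\binom{n}{j}p^{j}(1-p)^{n-j}\leq 1$). A direct numerical evaluation gives $H(1/8) = \tfrac{3}{8} + \tfrac{7}{8}\log_2(8/7) < 0.55$, and in particular $H(1/8) + \tfrac{1}{8} < 1$, so $V_n \leq 2^{n(1-1/8-\varepsilon)}$ for some fixed $\varepsilon>0$ and all $n$.

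Combining these pieces yields $V_n(2^{n/8}+1)\leq 2\cdot 2^{n H(1/8)+n/8}\leq 2^n$ for all sufficiently large $n$, and I would finish by checking the small cases $n=8,\dots,N_0$ by hand (the inequality is easiest at $n=8$, where $V_8 = 9$ and we need $9\cdot 3 = 27 \leq 256$, which is comfortable). This produces a set $S$ of size at least $\lceil 2^n/V_n\rceil \geq 2^{n/8}+1$ in which every pair of distinct elements has Hamming distance strictly greater than $n/8$, as required.

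I do not anticipate a serious obstacle: the main content is the entropy estimate on $V_n$, which is standard, and a routine verification that the resulting gap $1 - H(1/8) - 1/8$ is positive. The only bookkeeping detail is ensuring that the inequality holds uniformly for all $n\geq 8$ and not just asymptotically, but this is handled by the gap being a positive constant together with a direct check at $n=8$.
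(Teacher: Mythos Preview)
Your argument is the standard greedy/volume proof of the Varshamov--Gilbert bound and is correct. The paper does not actually prove this lemma: it simply quotes the version from Tsybakov's book \cite{Tsybakov2008introduction} and uses it as a black box, so there is nothing to compare against. One small remark: since the entropy gap $1 - H(1/8) - 1/8 \approx 0.33$ already gives $n(1-H(1/8)-1/8) \geq 1$ for every $n\geq 8$, your inequality $V_n(2^{n/8}+1)\leq 2^n$ in fact holds uniformly from the entropy bound alone, and the separate ``small cases'' check is unnecessary (your computation at $n=8$ confirms this with plenty of room).
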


\begin{lemma}[Fano's inequality]%
\label{lem:fano}
Assume $N\ge 3$ and suppose
$\{\theta _{1}, \dots , \theta _{N}\}\subset \Theta $ such that
\begin{itemize}
\item[(i)] for all $1\le i<j\le N$,
$d(\theta _{i}, \theta _{j})\ge 2\alpha $, where $d$ is a metric on
$\Theta $;
\item[(ii)] let $P_{i}$ be the distribution with respect to parameter
$\theta _{i}$, then for all $i,j\in [N]$, $P_{i}$ is absolutely continuous
with respect to $P_{j}$;
\item[(iii)] for all $i,j\in N$, the Kullback-Leibler divergence
$D_{\textnormal{KL}} ({P_{i}} \| {P_{j}} )\le \beta \log (N-1)$ for some
$0<\beta <1/8$.
\end{itemize}
Then
\begin{align*}
\inf _{\hat{\theta }}\sup _{\theta \in \Theta } \mathbb{P}(d(\hat{\theta },
\theta )\ge \alpha )\ge \frac{\sqrt{N-1}}{1+\sqrt{N-1}}\left (1-2
\beta -\sqrt{\frac{2\beta }{\log (N-1)}}\right ).
\end{align*}
\end{lemma}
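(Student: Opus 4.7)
The plan is to follow the strategy of Tsybakov (Theorem 2.5 in \cite{Tsybakov2008introduction}): first reduce the minimax estimation problem to a multiple hypothesis testing problem over the $N$ points $\theta_1, \ldots, \theta_N$, and then apply a sharpened information-theoretic lower bound on the worst-case testing error.

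For the reduction step, given any estimator $\hat{\theta}$ I would introduce the minimum-distance test $\psi(X) := \arg\min_{j \in [N]} d(\hat{\theta}(X), \theta_j)$ (with arbitrary tie-breaking). The $2\alpha$-separation from condition (i) combined with the triangle inequality shows that if $d(\hat{\theta}, \theta_i) < \alpha$ then $d(\hat{\theta}, \theta_j) > \alpha$ for all $j \neq i$, forcing $\psi(X) = i$; equivalently $\{\psi \neq i\} \subseteq \{d(\hat{\theta}, \theta_i) \geq \alpha\}$ under $P_i$. Taking the worst case over $i$ yields
\[
\inf_{\hat{\theta}} \sup_{\theta \in \Theta} \mathbb{P}_\theta(d(\hat{\theta}, \theta) \geq \alpha) \;\geq\; \inf_\psi \max_{i \in [N]} P_i(\psi \neq i),
\]
so it suffices to lower bound the right-hand side over all (possibly randomized) tests $\psi$.

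For the information-theoretic step, let $J$ be uniform on $[N]$ and $X \mid (J = j) \sim P_j$. Using convexity of KL divergence in its second argument together with condition (iii), I would bound the mutual information by
\[
I(J;X) \;\leq\; \frac{1}{N^2}\sum_{i,j=1}^N \kl{P_j}{P_i} \;\leq\; \beta \log(N-1),
\]
and then invoke the data processing inequality to get $I(J;\psi(X)) \leq I(J;X)$. The core technical step is a refined Fano-type inequality: writing $p_{j,i} := P_j(\psi = i)$, expanding $I(J;\psi)$ via the log-sum inequality, and optimizing over the free tuning parameter produces a lower bound on $\max_i P_i(\psi \neq i)$ of precisely the form $\frac{\sqrt{N-1}}{1+\sqrt{N-1}}\bigl(1 - 2\beta - \sqrt{2\beta/\log(N-1)}\bigr)$. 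Combined with the reduction step this gives the claim.

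The main obstacle will be obtaining the sharp constant $\frac{\sqrt{N-1}}{1+\sqrt{N-1}}$: the classical Fano inequality $h(p_e) + p_e \log(N-1) \geq \log N - I(J;X)$ on its own only gives a factor of the form $1 - \beta - \log 2/\log(N-1)$, which is asymptotically weaker. Producing the stated form requires the refined algebraic manipulation of Tsybakov (essentially a change-of-measure argument via Bayes' formula combined with careful optimization after applying log-sum), and this is the technical heart of the proof; the reduction to testing and the convexity-based KL bound on $I(J;X)$ are otherwise routine.
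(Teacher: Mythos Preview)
The paper does not prove this lemma; it is quoted verbatim as a known result from Tsybakov's book (the sentence just before the lemma reads ``We will use the version in~\cite{Tsybakov2008introduction}''), so there is no in-paper proof to compare against. Your sketch correctly identifies the two-step structure of Tsybakov's Theorem~2.5 --- reduction to multiple testing via the minimum-distance rule, then an information-theoretic lower bound --- and you are right that the classical Fano bound $h(p_e)+p_e\log(N-1)\ge \log N - I(J;X)$ is too weak to produce the $\frac{\sqrt{N-1}}{1+\sqrt{N-1}}$ prefactor.

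One small caution: the refined step in Tsybakov is not really obtained by ``optimizing after applying log-sum'' to the mutual information. Tsybakov's argument bypasses mutual information entirely and instead works with the likelihood ratios $dP_j/dP_0$ directly, using a Markov-type bound on the event $\{dP_j/dP_0 \ge \tau\}$ together with the KL hypothesis (iii), and then optimizing over the threshold $\tau$ (the choice $\tau = 1/\sqrt{N-1}$ is what produces the $\sqrt{N-1}/(1+\sqrt{N-1})$ factor). If you try to push the mutual-information route you outlined, you will recover only the weaker classical form. So your plan is sound in outline, but the ``technical heart'' you flag should be executed via the likelihood-ratio argument rather than via $I(J;X)$.
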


Since we will apply Fano's inequality associated with Kullback-Leibler
divergence, it requires the following lemma about random tensor with independent
Bernoulli entries.
%
\begin{lemma}%
\label{lem:kl:bound}
For $0\le a<b\le 1$, we consider parameters
$\theta , \theta '\in [a,b]^{n^{k}}$ for $0\le a<b\le 1$, and let
$P$ and $P'$ be the corresponding distributions, then the Kullback-Leibler
divergence satisfies
\begin{align*}
D_{\textnormal{KL}} ({P} \| {P'} ) \le
\frac{\|{\theta -\theta '}\|_{F}^{2}}{a(1-b)}.
\end{align*}
\end{lemma}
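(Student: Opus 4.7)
The plan is to exploit the product structure of the distributions: since every entry of the tensor is an independent Bernoulli variable, the joint KL divergence tensorizes into a sum of entrywise KL divergences. Concretely, writing $p_{\mathbf{i}} = \theta_{i_1,\dots,i_k}$ and $q_{\mathbf{i}} = \theta'_{i_1,\dots,i_k}$, I would first establish
\[
\kl{P}{P'} = \sum_{\mathbf{i}\in[n]^{k}} \kl{\text{Ber}(p_{\mathbf{i}})}{\text{Ber}(q_{\mathbf{i}})},
\]
which reduces the problem to the scalar Bernoulli case.

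Next, I would bound each Bernoulli KL divergence using the elementary inequality $\log x \le x-1$ applied to $x = p/q$ and $x=(1-p)/(1-q)$:
\[
p\log\frac{p}{q} + (1-p)\log\frac{1-p}{1-q} \le \frac{p(p-q)}{q} + \frac{(1-p)(q-p)}{1-q} = \frac{(p-q)^{2}}{q(1-q)}.
\]
Since $q_{\mathbf{i}} \in [a,b]$, we have $q_{\mathbf{i}}(1-q_{\mathbf{i}}) \ge a(1-b)$, so each term is at most $(p_{\mathbf{i}}-q_{\mathbf{i}})^{2}/(a(1-b))$.

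Finally, summing over all $\mathbf{i}\in [n]^{k}$ and recognizing $\sum_{\mathbf{i}}(p_{\mathbf{i}}-q_{\mathbf{i}})^{2} = \normfro{\theta-\theta'}^{2}$ yields the claimed inequality. There is no real obstacle here; the only mild subtlety is the lower bound on $q(1-q)$, where one must use both endpoints of $[a,b]$ (namely $q\ge a$ and $1-q \ge 1-b$) simultaneously, which is why the denominator takes the product form $a(1-b)$ rather than, say, $\min(a,1-b)^2$.
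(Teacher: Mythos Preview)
Your proposal is correct and is essentially identical to the paper's proof: both tensorize the KL divergence over independent Bernoulli entries, bound each scalar term via $\log x \le x-1$ (equivalently $\log(1+t)\le t$) to obtain $(p-q)^2/(q(1-q))$, and then use $q\ge a$, $1-q\ge 1-b$ before summing to recover $\normfro{\theta-\theta'}^2/(a(1-b))$.
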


\begin{proof}
We firstly consider entrywise KL-divergence. For $p,q\in [a,b]$,
\begin{align*}
D_{\textnormal{KL}} ({\text{Ber}(p)} \| {\text{Ber}(q)} ) &= p\log
\frac{p}{q} + (1-p)\log \frac{1-p}{1-q}
\\
&= p\log \Big ( 1+\frac{p-q}{q} \Big ) + (1-p)\log \Big (1-\frac{p-q}{1-q}
\Big )
\\
&\le p\Big ( \frac{p-q}{q} \Big ) + (1-p)\Big (-\frac{p-q}{1-q}\Big ) =
\frac{(p-q)^{2}}{q(1-q)}\le \frac{(p-q)^{2}}{a(1-b)}.
\end{align*}
By independence of each entry, we have
$D_{\textnormal{KL}} ({P} \| {P'} ) \le
\frac{\|{\theta -\theta '}\|_{F}^{2}}{a(1-b)}$.
\end{proof}

Now we are ready to prove Theorem~\ref{thm:minimax}.

\begin{proof}[Proof of Theorem~\ref{thm:minimax}]
By Lemma~\ref{lem:VG:bound}, there exists a subset
$\{\omega ^{(1)}, \dots , \omega ^{(N)}\}$ of $\{0,1\}^{n}$ such that
\begin{equation*}
\min _{1\le i<j\le N} H(\omega ^{(i)}, \omega ^{(j)})>\frac{n}{8} \quad
\text{ and } \quad N\ge 2^{n/8}+1\ge e^{n/12}+1.
\end{equation*}
We note that
$H(\omega ^{(i)}, \omega ^{(j)})=\|\omega ^{(i)}- \omega ^{(j)}\|_{2}^{2}$.
Let $W$ be a fixed order-$(k-1)$ tensor with entries either $0$ or
$1$ and dimension $n\times \dots \times n$. The entries of $W$ is designed
as follows. Let $m=\lfloor p^{-\frac{1}{k-1}}\rfloor \wedge n$, so
$1\le m^{k-1}\le 1/p$. We assign $1$'s to an
$m\times \dots \times m$ subtensor of $W$ and assign $0$'s to the rest
entries. Then the rank of $W$ is 1 and $\|W\|=\|W\|_{F}=m^{(k-1)/2}$. Now
we define for $i\in [N]$,
\begin{align*}
\theta ^{(i)} := \frac{p}{2} J + \frac{p}{30} \omega ^{(i)}\otimes W,
\end{align*}
where $J\in \mathbb{R}^{n^{k}}$ is an order-$k$ tensor with all ones, and
\begin{equation*}
(\omega ^{(i)}\otimes W)_{i_{1},\dots , i_{k}} = \omega ^{(i)}_{i_{1}}
w_{i_{2},\dots , i_{k}}.
\end{equation*}
Then for all $i,j\in [N]$,
$\theta ^{(i)}-\theta ^{(j)}=\frac{p}{30}(\omega ^{(i)}-\omega ^{(j)})
\otimes W$. By the choice of $\theta ^{(i)}$'s,
\begin{align*}
\min _{1\le i<j\le N}\|\theta ^{(i)} - \theta ^{(j)}\|^{2} = \min _{1
\le i<j\le n}
\frac{\|\omega ^{(i)}- \omega ^{(j)}\|_{2}^{2} \|W\|^{2}p^{2}}{900}
\ge \frac{nm^{k-1}p^{2}}{7200}.
\end{align*}
On the other hand, $\|\omega ^{(i)}- \omega ^{(j)}\|_{2}^{2}\le n$, so
\begin{align*}
\max _{1\le i<j\le N}\|\theta ^{(i)} - \theta ^{(j)}\|^{2} = \max _{1
\le i<j\le N}
\frac{\|\omega ^{(i)}- \omega ^{(j)}\|_{2}^{2} \|W\|^{2}p^{2}}{900}
\le \frac{nm^{k-1}p^{2}}{900}.
\end{align*}
Let $P_{i}$ be the distribution of a random tensor $T$ associated with
parameter $\theta ^{(i)}$ for $i\in [N]$. Since
$\theta ^{(i)}\in [\frac{p}{2}, \frac{8p}{15}]^{n^{k}}$, by Lemma~\ref{lem:kl:bound},
we have
\begin{align*}
\max _{1\le i<j\le N} D_{\textnormal{KL}}(P_{i}\| P_{j})&\le \max _{1
\le i<j\le N}
\frac{\|\theta ^{(i)} - \theta ^{(j)}\|^{2}_{F}}{\big (\frac{p}{2}\big )\big (1-\frac{8p}{15}\big )}
\\
&\le
\frac{nm^{k-1}p^{2}}{900\big (\frac{p}{2}\big )\big (1-\frac{8p}{15}\big )}
\le \frac{nm^{k-1}p}{210}\le \frac{n}{210},
\end{align*}
where the last inequality is due to the choice
$m=\lfloor p^{-\frac{1}{k-1}}\rfloor \wedge n\le p^{-\frac{1}{k-1}}$. To
apply Fano's inequality, we let
$\alpha = \frac{nm^{k-1}p^{2}}{14400}$ and verify that for
$i,j\in [N]$,
\begin{equation*}
D_{\textnormal{KL}}(\theta ^{(i)}, \theta ^{(j)})\le \frac{n}{210}\le
\beta \log e^{n/12}
\end{equation*}
for $\beta =\frac{1}{9}$. Then by Lemma~\ref{lem:fano}, we have
\begin{align*}
\inf _{\hat{\theta }}\sup _{\theta \in [0,p]^{n^{k}}}\mathbb{P}\Big (\|
\hat{\theta }- \theta \|^{2}\ge \frac{nm^{k-1}p^{2}}{14400}\Big ) &\ge
\frac{2^{n/16}}{1+2^{n/16}}\left (1-\frac{2}{9} -\sqrt{\frac{2/9}{n/12}}
\right )\ge \frac{1}{3}
\end{align*}
when $n\ge 16$. By the choice of $m$, we have
\begin{equation*}
nm^{k-1}p^{2} =n(\lfloor p^{-\frac{1}{k-1}}\rfloor \wedge n)^{k-1}p^{2}
\ge (2^{1-k}np)\wedge (n^{k}p^{2}),
\end{equation*}
which gives the desired result.
\end{proof}

\section{Proof of Lemma~\ref{lem:num:regularized}}
\label{sec8}

Similar to~\eqref{eq:boundedelemma}, by Bernstein's inequality, we have
for each $(i_{1},\dots ,i_{k-m})\in [n]^{k-m}$,
\begin{equation*}
\mathbb{P}(d_{i_{1},\dots ,i_{k-m}}>2 n^{m}p)\le \exp \left ( -
\frac{3n^{m}p}{8}\right ).
\end{equation*}
Then $\mathbf{1}\{ d_{i_{1},\dots ,i_{k-m}}>2n^{m}p\}$ is a Bernoulli random
variable with mean at most
$\mu :=\exp \left ( -\frac{3n^{m}p}{8}\right )$. Since
$d_{i_{1},\dots ,i_{k-m}}$ are independent for all
$(i_{1},\dots ,i_{k-m})\in [n]^{k-m}$, by Chernoff's inequality~\eqref{eq:weakercher}, for any $\lambda \geq 0$,
%
\begin{align}
\label{eq:bernstein}
& \mathbb{P}\left (|\tilde{S}|\geq (1+\lambda )n^{k-m}\mu \right )
\notag
\\
=& \mathbb{P}\left ( \sum _{i_{1},\dots ,i_{k-m}\in [n]}^{n}
\mathbf{1}\{ d_{i_{1},\dots ,i_{k-m}}>2n^{m}p\}\geq (1+\lambda )n^{k-m}
\mu \right )
\notag
\\
\leq &\exp \left ( -\frac{\lambda ^{2}n^{k-m}\mu }{2+\lambda }\right ).
\end{align}
Since $n^{m}p\ge c$, we can choose a constant $c=9$ and take
%
\begin{align}
\label{eq:deltabound}
\lambda =\frac{1}{n^{m}p\mu }-1=
\frac{\exp \left (\frac{3n^{m}p}{8}\right )}{n^{m}p}-1\ge 1,
\end{align}
so that $2+\lambda \le 3\lambda $, and from~\eqref{eq:deltabound} we know
%
\begin{align}
\label{eq:nexp}
n^{k-m}\exp \left (-\frac{3n^{m}p}{8}\right )\leq
\frac{1}{2n^{2m-k}p}.
\end{align}
Then~\eqref{eq:bernstein} implies
\begin{align}
\mathbb{P}\left (|S|\geq \frac{1}{n^{2m-k}p} \right ) &\leq \exp
\left ( -\frac{\lambda n^{k-m} \mu }{3} \right )
\notag
\\
&=\exp \left (-\frac{1}{3} n^{k-m}\mu \left (\frac{1}{n^{m}p\mu }-1
\right )\right )
\notag
\\
&= \exp \left ( -\frac{1}{3n^{2m-k}p}+\frac{1}{3}n^{k-m}\exp \left ( -
\frac{3n^{m}p}{8}\right )\right )
\notag
\\
&\leq \exp \left (-\frac{1}{6n^{2m-k}p} \right ) \leq \exp \left ( -
\frac{n^{k-m}}{6\log n}\right ),
\notag
\end{align}
where the last line of inequalities follows from~\eqref{eq:nexp} and our
assumption that $n^{m}p<\log n$.

\section{Proof of Theorem \ref{thm:regularize}}
\label{sec:regularization}

Let $T$ be the adjacency tensor of a $k$-uniform random directed hypergraph
and $P=\mathbb{E}T$. Recall Definition~\ref{def:tensorunfolding}. Let
$\pi =\{ \{1,2,\dots ,k-m\}, \{k-m+1,\dots , k\}\}$ be a $2$-partition of
$[k]$. Then $\textnormal{Unfold}_{\pi }(T-\mathbb{E}T)$ is an
$n^{k-m}\times n^{m}$ random matrix whose entries are one-to-one correspondent
to entries in $(T-\mathbb{E}T)$. Let
$A\in \mathbb{R}^{n^{m}}\times \mathbb{R}^{n^{m}}$ be a matrix such that
\begin{equation*}
A_{i,j}=
\begin{cases}
\textnormal{Unfold}_{\pi }(T)_{i,j} & \text{ if } i\in [n^{k-m}], j
\in [n^{m}],
\\
0 & \text{ otherwise.}
\end{cases}
\end{equation*}
Then $A$ is an adjacency matrix of a random directed graph on
$n^{m}$ vertices with $p\geq \frac{c}{n^{m}}$. Regularizing $A$ by removing
vertices of degrees greater than $2n^{m}p$, from Theorem 2.1 in
\cite{le2017concentration}, we have with probability at least
$1-n^{-rm}$, $ \|\hat{A}-\mathbb{E}A \|\leq C\sqrt{n^{m}p}$. By the way
we regularize an order-$k$ random tensor $T$ in~\eqref{eq:regularizationdef}, we have
$\textnormal{Unfold}_{\pi }(\hat{T}-P)$ is a submatrix of
$(\hat{A}-\mathbb{E}A)$ with other entries being $0$. Therefore by Lemma~\ref{lem:unfolding}, with probability at least $1-n^{-r}$,
\begin{equation*}
\|\hat{T}-P\|\leq \|\textnormal{Unfold}_{\pi }(\hat{T}-P)\|\leq \|
\hat{A}-\mathbb{E}A \|\leq C\sqrt{n^{m}p}.
\end{equation*}
This completes the proof of~\eqref{eq:regularization2} in Theorem~\ref{thm:regularize}.

\section{Proof of Theorem \ref{thm:mixing}}
\label{sec10}

Let $1_{V_{i}}$ be the indicator vector of
$V_{i}\not =\emptyset ,1\leq i\leq k$ such that the $j$-th entry of
$1_{V_{i}}$ is $1$ if $j\in V_{i}$ and $0$ if $j\not \in V_{i}$. We then
have
\begin{align*}
&
\frac{\left |e_{H'}(V_{1},\dots , V_{k})-p |V_{1}|\cdots |V_{k}| \right |}{\sqrt{|V_{1}|\cdots |V_{k}|}}
\\
=&\frac{\left |T'(1_{V_{1}},\dots , 1_{V_{k}})-p\cdot J(1_{V_{1}},\dots , 1_{V_{k}})\right |}{\sqrt{|V_{1}|\cdots |V_{k}|}}
\\
=&\left |T'\left (\frac{1_{V_{1}}}{\sqrt{|V_{1}|}},\dots ,
\frac{1_{V_{k}}}{\sqrt{|V_{k}|}}\right )-p\cdot J\left (
\frac{1_{V_{1}}}{\sqrt{|V_{1}|}},\dots ,
\frac{1_{V_{k}}}{\sqrt{|V_{k}|}}\right )\right |
\\
\leq &\left \|  T'-p J\right \|  \leq C\sqrt{n^{k-1}p}=C\sqrt{c},
\end{align*}
where the last line is from the definition of the spectral norm for tensors
and~\eqref{eq:expandreg}. Then~\eqref{eq:mixinglemma} follows.

\section{Conclusions}
\label{sec11}

In this paper, we considered the concentration of sparse random tensors
under spectral norm in different sparsity regimes. When
$p\geq \frac{c\log n}{n}$, the extra log factor in Theorem 2.1 is due
to the proof techniques, which only appears when bounding the contribution
from $\mathcal{C}_{4}$ and $\mathcal{C}_{5}$ in Section~\ref{sec:heavy}.
The Kahn-Szemer{\'{e}}di argument was specially designed to control random
quadratic forms in the matrix case, but not for random multi-linear forms
in the tensor case. An improvement to remove the extra log factors (which
we conjecture should not appear) will require a new argument.

The regularization step we used is a sufficient way to recover the concentration
of spectral norms, and it relies on the tensor unfolding inequality in
Lemma~\ref{lem:unfolding}. It remains to extend the analysis to other sparsity
regimes. Proving a lower bound on the spectral norm without regularization
will involve a more combinatorial argument similar to
\cite{krivelevich2003largest,benaych2019largest}. We leave it as a future
direction.

It would also be interesting to discuss the dependence on $k$ for the constant
$C$ in~\eqref{eq:Hadamardproductinequality}. Using the
$\varepsilon $-net argument, we obtain a constant $C$ depending exponentially
on $k$. It is possible that by using different arguments, the dependence
can be improved.

\subsection*{Acknowledgments}
We thank anonymous referees for their detailed comments and suggestions, which have improved the quality of this paper.  We also thank Arash A. Amini, Nicholas Cook,  Ioana Dumitriu, Kameron Decker Harris and Roman Vershynin for helpful comments. Y.Z. is partially supported by NSF DMS-1949617.

\bibliographystyle{plain}
\bibliography{ref.bib}

\end{document}